\providecommand{\U}[1]{\protect\rule{.1in}{.1in}}
\numberwithin{equation}{section}
\numberwithin{equation}{section}
\numberwithin{equation}{section}
\def\e{\varepsilon}
\newtheorem{theorem}{Theorem}[section]
\newtheorem{definition}[theorem]{Definition}
\newtheorem{lemma}[theorem]{Lemma}
\newtheorem{proposition}[theorem]{Proposition}
\newtheorem{remark}[theorem]{Remark}
\newenvironment{proof}[1][Proof]{\noindent\textbf{#1.} }{\ \rule{0.5em}{0.5em}}
\begin{document}

\title{A relaxation result in $BV\times L^{p}$ for integral functionals depending on chemical composition and elastic strain.}
\author{\textsc{Gra\c{c}a Carita}\thanks{CIMA-UE, Departamento de Matem\'{a}tica,
Universidade de \'{E}vora, Rua Rom\~{a}o Ramalho, 59 7000 671 \'{E}vora,
Portugal e-mail: gcarita@uevora.pt}, 
\textsc{ Elvira Zappale}\thanks{D.I.In., Universita' degli
Studi di Salerno, Via Giovanni Paolo II 132, 84084 Fisciano (SA) Italy
e-mail:ezappale@unisa.it}}
\maketitle

\begin{abstract}
An integral representation result is obtained for the relaxation of a class of energy functionals depending on two vector fields with different behaviors which appear in the context of thermochemical equilibria and are related to image decomposition models and directors theory in nonlinear elasticity.

Keywords: relaxation, convexity-quasiconvexity.

MSC2000 classification: 49J45, 74Q05

\end{abstract}

\section{Introduction}

In this paper we consider energies depending on two vector fields with
different behaviors: $u\in W^{1,1}\left(  \Omega;\mathbb{R}^{n}\right)  $ and
$v\in L^{p}\left(  \Omega;\mathbb{R}^{m}\right)  ,~\Omega$ being a bounded
open subset of $\mathbb{R}^{N}.$
 
Let $1<p\leq\infty$ and for every $\left(  u,v\right)  \in W^{1,1}(
\Omega;\mathbb{R}^{n})  \times L^{p}(\Omega;\mathbb{R}%
^{m})  $ define the functional
\begin{equation}\label{functional}
J\left(  u,v\right)  :=\int_{\Omega}f\left(v  ,\nabla u  \right)  dx
\end{equation}
where $f:\mathbb{R}^{m}\times\mathbb{R}%
^{n\times N}\rightarrow\left[  0,\infty\right)  $ is a continuous function.

Minimization of energies depending on two independent vector fields have
been introduced to model several phenomena. For instance the case of
thermochemical equilibria among multiphase multicomponent solids and Cosserat theories in the context of elasticity: we refer
to \cite{LDR, FKP2} and the references therein for a detailed explanation about this kind of applications.

In the Sobolev setting, after the pioneer works \cite{FKP2, LDR}, relaxation with a Carath\'{e}odory
density $f\equiv f\left( x,u,\nabla u,v\right),$ and homogenization for density of the type $f\left(\frac{x}{\varepsilon}, \nabla u, v\right)$ have been considered in \cite{CRZ2} and \cite{CRZ1}, respectively.

In the present paper we are interested in studying the lower semicontinuity and relaxation of $%
\left( \ref{functional}\right) $ with respect to the $L^{1}-$strong$\times
L^{p}-$weak convergence.
Clearly, bounded sequences $\left\{ u_{h}\right\} \subset W^{1,1}(
\Omega ;\mathbb{R}^{n}) $ may converge in $L^{1}$, up to a
subsequence, to a $BV$ function. 

In the $BV$-setting this question has been already addressed in \cite{FKP1}, only when the density $f$ is convex-quasiconvex (see \eqref{cross-qcx}) and the vector field $v \in L^\infty(\Omega;\mathbb{R}^{m})$. 

Here we allow $v$ to be in $L^p(\Omega;\mathbb{R}^{m}), p >1$ and $f$ is  not necessarily convex-quasiconvex. We provide an argument alternative to the one in \cite{FKP1}, devoted to clarify some points in the lower semicontinuity result therein.

We also emphasize that under specific restrictions on the density $f$, i.e. $f(x,u,v,\nabla u)\equiv W(x,u,\nabla u)+ \varphi (x,u,v)$, such analysis was considered already in \cite{RZCh} in order to describe image decomposition models. In \cite{RZ} a general $f$ was taken into account when the target $u$ is in $W^{1,1}(\Omega;\mathbb{R}^{n})$. 

In this manuscript we consider $f \equiv f(v, \nabla u)$ and $u \in BV(\Omega;\mathbb{R}^{n})$.

We study separately the cases $1<p<\infty$ and $p=\infty.$ To this end,
we introduce for $1<p<\infty$ the functional%
\begin{equation}
\overline{J}_{p}\left(  u,v\right)  :=\inf\left\{  \underset{h\rightarrow
\infty}{\lim\inf}J\left(  u_h,v_h\right)  :u_h\in W^{1,1}\left(
\Omega;\mathbb{R}^{n}\right)  ,~v_h\in L^{p}\left(  \Omega;\mathbb{R}%
^{m}\right)  ,~u_h\rightarrow u\text{ in }L^{1},~v_h\rightharpoonup
v\text{ in }L^{p}\right\}  , \label{relaxedp}%
\end{equation}
for any pair $(u,v)  \in BV(\Omega;\mathbb{R}^{n})
\times L^{p}(\Omega;\mathbb{R}^{m})  $ and, for $p=\infty$ the
functional%
\begin{equation}
\overline{J}_{\infty}(u,v)  :=\inf\left\{  \underset{h\rightarrow
\infty}{\lim\inf}J\left(  u_h,v_h\right)  :u_h\in W^{1,1}\left(
\Omega;\mathbb{R}^{n}\right)  ,~v_h\in L^{\infty}\left(  \Omega
;\mathbb{R}^{m}\right)  ,~u_h\rightarrow u\text{ in }L^{1},~v_h%
\overset{\ast}{\rightharpoonup}v\text{ in }L^{\infty}\right\}  ,
\label{relaxedinfty}%
\end{equation}
for any pair $\left(  u,v\right)  \in BV\left(  \Omega;\mathbb{R}^{n}\right)
\times L^{\infty}\left(  \Omega;\mathbb{R}^{m}\right)  .$

 Since bounded sequences $\{u_h\}$ in $W^{1,1}(\Omega;\mathbb R^n)$ converge in $L^1$ to a $BV$ function $u$ and bounded sequences $\{v_h\}$ in $L^p(\Omega;\mathbb R^m)$ if $1<p < \infty$, (in $L^\infty(\Omega;\mathbb R^m)$ if $p=\infty$) weakly converge to a function  $ v \in L^p(\Omega;\mathbb R^m)$, (weakly $\ast$ in $L^\infty$), the relaxed functionals  $\overline{J}_{p}$ and $\overline{J}_{\infty}$ will be composed by an absolutely continuous part and a singular one with respect to the Lebesgue measure (see \eqref{summeas}). 
 %absolutely continuous with respect to the singular part of the gradient of $u$, $D^su$. 
 On the other hand, as already emphasized in \cite{FKP1}, it is crucial to observe that $v$, regarded as a measure, is absolutely continuous with respect to the Lebesgue one, besides it is not defined on the singular sets of $u$. Namely in those sets where the singular part with respect the Lebesgue measure of the distributional gradient of $u$, $D^su$, is concentrated. Thus specific features of the density $f$ will come into play to ensure a proper integral representation.

The integral representation of (\ref{relaxedp}) will be achieved  in Theorem \ref{MainResultp} under the following hypotheses:
\begin{itemize}

\item[$(H_1)_{p}$] There exists $C>0$ such that
\[
\frac{1}{C}\left(  \left\vert b\right\vert ^{p}+\left\vert \xi\right\vert
\right)  -C\leq f\left( b,\xi\right)  \leq C\left(  1+\left\vert
b\right\vert ^{p}+\left\vert \xi\right\vert \right)  ,
\]
for $(b,\xi)  \in \mathbb{R}^{m}\times\mathbb{R}^{n\times N}$.
\item[$(H_2)_p$] There exists $C'>0, L>0,0<\tau\leq1$ such
that
\[
t>0,~\xi\in\mathbb{R}^{n\times N},\text{ with }t\left\vert \xi\right\vert
>L\Longrightarrow\left\vert \frac{f\left(b,t\xi\right)  }{t}-f^{\infty
}\left(  b,\xi\right)  \right\vert \leq C^{\prime}\left(  
\frac{\left\vert b\right\vert ^{p}+1}{t}  +\frac{\left\vert \xi\right\vert
^{1-\tau}}{t^{\tau}}\right),
\]

where $f^\infty$ is the recession function of $f$ defined for every $ b\in  \mathbb R^m$ as 

\begin{equation}\label{recession}
\displaystyle{f^\infty(b, \xi):= \limsup_{t \to \infty}\frac{f(b, t \xi)}{t}}.
\end{equation} 
\end{itemize}

In order to characterize the functional $\overline{J}_{\infty}$ introduced in
$\left(  \ref{relaxedinfty}\right)  $ we will replace assumptions $(H_1)_p$ and $(H_2)_p$ by the following ones:
\begin{itemize}
\item[$(H_1)  _{\infty}$] Given $M>0,$ there exists $C_{M}>0$ 
%and a bounded continuous function $G_{M}:\Omega\times\mathbb{R}^{n}\rightarrow \left[  0,\infty\right)  $ 
such that, if $\left\vert v\right\vert \leq M$
then%
\[
\frac{1}{C_{M}}\left\vert \xi\right\vert -C_{M}\leq f\left(b,\xi\right)
\leq C_{M}\left(  1+\left\vert \xi\right\vert \right)  ,
\]
for every $\xi  \in \mathbb{R}^{n\times N}.$

\item[$(H_2)_{\infty}$] Given $M>0$, there exist $C_{M}^{\prime
}>0,~L>0,~0<\tau\leq1$ such that
\[
\left\vert b\right\vert \leq M,~t>0,~\xi\in\mathbb{R}^{n\times N},~\text{with
}t\left\vert \xi\right\vert >L\Longrightarrow\left\vert \frac{f\left(
b,t\xi\right)  }{t}-f^{\infty}\left( b,\xi\right)  \right\vert \leq
C_{M}^{^{\prime}}\frac{\left\vert \xi\right\vert ^{1-\tau}}{t^{\tau}}.
\]

\end{itemize}

Section \ref{notpre} is devoted to notations, preliminaries about measure theory and some properties of the energy densities. In particular, we stress that a series of results is presented in order to show all the properties and relations among the relaxed energy densities involved in the integral representation and that can be of further use for the interested readers since they often appear in the integral representation context. Section \ref{Main} contains the arguments necessary to prove the main results stated below.

\begin{theorem}\label{MainResultp}
Let $J$ be given by (\ref{functional}), with $f$ satisfying $(H_1)_p$ and $(H_2)_p$ and let ${\overline J}_p$ be given by \eqref{relaxedp} then 
\begin{equation}\nonumber
\overline{J}_p(u,v)= \int_\Omega   {\cal CQ}f\left(v, \nabla u\right) dx
+ \int_\Omega ({\cal CQ}f)^\infty\left(0,\frac{dD^su}{d|D^s u|}\right) d|D^s u|,
\end{equation}
for every $(u,v)\in BV(\Omega;\mathbb R^n)\times L^p(\Omega;\mathbb R^m)$.
\end{theorem}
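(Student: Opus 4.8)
The plan is to establish the two matching inequalities $\overline{J}_p \ge \mathcal{F}$ and $\overline{J}_p \le \mathcal{F}$, where $\mathcal{F}(u,v)$ denotes the right-hand side of the asserted formula. For the lower bound I would first reduce to the convex--quasiconvex density: since $\mathcal{CQ}f \le f$ pointwise, one has $J(u_h,v_h) \ge \int_\Omega \mathcal{CQ}f(v_h,\nabla u_h)\,dx$, so it suffices to prove that the functional with density $\mathcal{CQ}f$ is sequentially lower semicontinuous along $u_h \to u$ in $L^1$ and $v_h \rightharpoonup v$ in $L^p$. By the growth and coercivity in $(H_1)_p$ the envelope $\mathcal{CQ}f$ still obeys the same $p$-growth from above and $(|b|^p+|\xi|)$-coercivity from below, and its recession function still satisfies the decay estimate of $(H_2)_p$; I would record these facts from Section~\ref{notpre}.

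To run the lower semicontinuity argument I would pass to a subsequence realizing the $\liminf$ as a limit and for which the measures $\mu_h := \mathcal{CQ}f(v_h,\nabla u_h)\,\mathcal{L}^N$ converge weakly-$\ast$ on $\Omega$ to a finite Radon measure $\mu$; the coercivity in $(H_1)_p$ forces $\{v_h\}$ bounded in $L^p$ and $\{\nabla u_h\}$ bounded in $L^1$, so the limits lie in $BV \times L^p$. I would then apply the blow-up method. At $\mathcal{L}^N$-a.e.\ $x_0$ I rescale at unit (gradient) scale, use that the rescaled fields converge weakly to the constants $(v(x_0),\nabla u(x_0))$, and invoke convex--quasiconvexity together with lower semicontinuity in the affine-target case to get $\frac{d\mu}{d\mathcal{L}^N}(x_0) \ge \mathcal{CQ}f(v(x_0),\nabla u(x_0))$. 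At $|D^su|$-a.e.\ $x_0$ I rescale at the singular scale; here the decisive observation is that, $|D^su|$ being singular with respect to $\mathcal{L}^N$, the Lebesgue measure of the blow-up cubes is negligible against their $|D^su|$-mass, so the averaged contribution of the $L^p$-bounded fields $v_h$ vanishes and $(H_2)_p$ permits replacing $\mathcal{CQ}f$ by its recession function evaluated at $b=0$, yielding $\frac{d\mu}{d|D^su|}(x_0) \ge (\mathcal{CQ}f)^\infty(0,\frac{dD^su}{d|D^su|}(x_0))$. Summing the two Radon--Nikodym densities against their respective measures gives $\overline{J}_p(u,v) \ge \mathcal{F}(u,v)$.

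For the upper bound I would build recovery sequences in two stages. When $u \in W^{1,1}$ (so $D^su=0$) the target $\int_\Omega \mathcal{CQ}f(v,\nabla u)\,dx$ is reached by the single infimum defining the convex--quasiconvex envelope recorded in Section~\ref{notpre}: one combines oscillating perturbations of $v$ (which converge weakly in $L^p$ to $v$ and produce the convexification in the $b$-slot) with test gradients realizing the quasiconvexification in the $\xi$-slot, exactly as in the $W^{1,1}$ relaxation of \cite{RZ}. To pass to a general $u \in BV$ I would approximate $u$ by smooth maps $u_k$ converging strictly to $u$, so that $\nabla u_k\,\mathcal{L}^N \to Du$ in the sense required by Reshetnyak's continuity theorem; applying that theorem to the $1$-homogeneous integrand associated with $(\mathcal{CQ}f)^\infty(0,\cdot)$ transfers the bulk construction to the limit and reproduces the singular term, the first slot being $0$ because $v$ does not charge the $\mathcal{L}^N$-null singular set of $u$. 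A diagonal argument reconciles the two stages and yields $\overline{J}_p(u,v) \le \mathcal{F}(u,v)$.

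The step I expect to be the main obstacle is the singular blow-up in the lower bound, namely making rigorous that the weak-$L^p$ field $v_h$ contributes nothing on the singular set of $u$. In contrast to the $L^\infty$ setting of \cite{FKP1}, where $v$ is uniformly bounded, here $|v_h|^p$ may concentrate, and one must combine the vanishing Lebesgue density of the blow-up cubes with the precise error control of $(H_2)_p$ --- in particular the term $\tfrac{|b|^p+1}{t}$ --- to show that the full $b$-dependence disappears in the limit and only $(\mathcal{CQ}f)^\infty(0,\cdot)$ survives. This is precisely the point the authors single out as requiring clarification relative to \cite{FKP1}.
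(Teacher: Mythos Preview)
Your overall strategy is sound and close in spirit to the paper's, but the execution differs in two places worth noting.

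For the lower bound, the paper does \emph{not} blow up along the sequences $(u_h,v_h)$. Instead it first shows (Proposition~\ref{overlineJpvariational}) that the localized relaxation $\overline{J}_p(u,v;\cdot)$ is the trace of a Radon measure, and then performs the blow-up on this measure using the scaling properties of $\overline{J}_p$ itself (Proposition~\ref{scalingproperties}) together with Lemma~\ref{Lemma5.50AFP}. The payoff is that the key singular estimate \eqref{0derivativev} involves only the \emph{limit} field $v\in L^p$, for which $\int_{Q(x_0,\varrho)}|v|^p\,dx/|Du|(Q(x_0,\varrho))\to 0$ follows directly from Besicovitch. In your Fonseca--M\"uller style blow-up you instead face controlling $|v_h|^p$ along a diagonal subsequence on the shrinking cubes, where weak $L^p$ convergence does not by itself rule out concentration; this can be made to work, but it is heavier precisely at the obstacle you flag, and it is not the route the paper takes.

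For the upper bound, the paper's self-contained argument (Theorem~\ref{uppergeneralp}) does not use Reshetnyak for general $v$. It introduces the $(p,1)$-positively homogeneous auxiliary $g(b,\xi)=\sup_{t\ge 0}\bigl(f(t^{1/p}b,t\xi)-f(0,0)\bigr)/t$, which pushes the $|v|^p$ contribution into a Lebesgue-absolutely-continuous term and leaves a $1$-homogeneous singular part amenable to mollification. The Reshetnyak route you propose is exactly the alternative sketched in Remark~\ref{otherproof}, but there it is carried out first for $v\in C(\overline{\Omega};\mathbb{R}^m)$ (so that the $x$-dependence is continuous, as Reshetnyak requires) and only afterwards extended to $v\in L^p$ by density; you should make this reduction explicit, since the continuity theorem does not apply directly with merely $L^p$ spatial dependence.

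Finally, a structural remark: rather than using $\mathcal{CQ}f\le f$ on the lower-bound side and rebuilding oscillating recovery sequences on the upper-bound side, the paper proves both inequalities under the extra assumption $(H_0)$ and then removes it in one stroke via the identity $\overline{J}_p=\overline{J_{\mathcal{CQ}f}}_p$ of Proposition~\ref{relaxedp=relaxedp}. Your reduction is perfectly legitimate for the lower bound, but on the upper-bound side you are effectively reproving that proposition.
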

We denote by  ${\cal CQ}f$ the convex-quasiconvex envelope of $f$ in \eqref{CQ envelope} and $({\cal CQ}f)^\infty$ represents the recession function of ${\cal CQ}f$, defined according to \eqref{recession}, which coincides, under suitable assumptions, (see assumptions \eqref{H1_1inftyCQf}, \eqref{H2_inftyCQf}, Proposition \ref{CQf=CQf} and Remark \ref{remCQfinfty}),  with the convex-quasiconvex envelope of $f^\infty$, ${\cal CQ}(f^\infty)$, and this allows us to remove the parenthesis.

For the case $p=\infty$ we have the following.
\begin{theorem}\label{MainResultinfty}
Let $J$ be given by (\ref{functional}), with $f$ satisfying $(H_1)_{\infty}$ and $(H_2)_{\infty}$ and let ${\overline J}_\infty$ be given by \eqref{relaxedinfty} then 
\begin{equation}\nonumber
\overline{J}_\infty(u,v)= \int_\Omega {\cal CQ}f\left(v,\nabla u\right) dx
+ \int_\Omega  ({\cal CQ}f)^\infty\left(0,\frac{dD^su}{d|D^s u|}\right) d|D^s u|,
\end{equation}
for every $(u,v)\in BV(\Omega;\mathbb R^n)\times L^{\infty}(\Omega;\mathbb R^m)$.
\end{theorem}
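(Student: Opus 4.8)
\textbf{Proof strategy for Theorem \ref{MainResultinfty}.}
The plan is to prove the two inequalities $\overline{J}_\infty \le (\text{RHS})$ and $\overline{J}_\infty \ge (\text{RHS})$ separately, mimicking the structure used for Theorem \ref{MainResultp} but exploiting the uniform $L^\infty$ bound on the recovery sequences for $v$ to localize the growth constants. For the upper bound, the natural route is a blow-up/density argument: first establish the estimate when $u \in W^{1,1}(\Omega;\mathbb{R}^n)$ and $v \in L^\infty$, where the singular term is absent and one only needs $\overline{J}_\infty(u,v)\le \int_\Omega \mathcal{CQ}f(v,\nabla u)\,dx$; this follows from the definition of the convex--quasiconvex envelope \eqref{CQ envelope}, approximating $(v,\nabla u)$ pointwise by the convexifying/quasiconvexifying fields and using $(H_1)_\infty$ together with a standard diagonalization. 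Then, for a general $u \in BV$, I would mollify: take $u_\varepsilon = u * \rho_\varepsilon$, keep $v$ fixed (it is already in $L^\infty$), and pass to the limit using the reverse Reshetnyak-type continuity of the measure $\mu_\varepsilon := \mathcal{CQ}f(v,\nabla u_\varepsilon)\,\mathcal{L}^N$ together with the recession behavior of $\mathcal{CQ}f$ controlled by \eqref{H1_1inftyCQf}, \eqref{H2_inftyCQf} and Remark \ref{remCQfinfty}; crucially, because $v \in L^\infty$, the relevant growth constant $C_M$ is uniform along the mollification, so $\mathcal{CQ}f(v,\nabla u_\varepsilon)$ stays equi-integrable in the right sense and one recovers exactly $\int_\Omega \mathcal{CQ}f(v,\nabla u)\,dx + \int_\Omega (\mathcal{CQ}f)^\infty(0, dD^su/d|D^su|)\,d|D^su|$, the zero in the first slot arising because $v\mathcal{L}^N$ is absolutely continuous and thus invisible to $|D^su|$.

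For the lower bound, fix any admissible pair of sequences $u_h \to u$ in $L^1$, $v_h \overset{\ast}{\rightharpoonup} v$ in $L^\infty$, with $\liminf_h J(u_h,v_h) < \infty$. By $(H_1)_\infty$ and the $L^\infty$-boundedness of $\{v_h\}$, the gradients $\{\nabla u_h\}$ are bounded in $L^1$, so (up to subsequence) $\{u_h\}$ is bounded in $BV$. I would then use the standard localization method: introduce the set functions $\mathcal{F}_h(A) := \int_A f(v_h,\nabla u_h)\,dx$ on open subsets $A \subseteq \Omega$, pass to a weak-$*$ limit measure $\mu$, and show $\mu \ge \mathcal{CQ}f(v,\nabla u)\,\mathcal{L}^N + (\mathcal{CQ}f)^\infty(0, dD^su/d|D^su|)\,|D^su|$ by computing the Radon--Nikod\'ym derivative $d\mu/d\mathcal{L}^N$ at $\mathcal{L}^N$-a.e.\ point and $d\mu/d|D^su|$ at $|D^su|$-a.e.\ point via blow-up. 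At Lebesgue points of $(v,\nabla u)$ one rescales and uses the lower semicontinuity of the convex--quasiconvex relaxed functional along the blown-up sequences (here $v_h$ rescales to the constant $v(x_0)$ by Lebesgue differentiation combined with the weak-$*$ convergence, and the convexity in $b$ plus quasiconvexity in $\xi$ yields the bound via the Decomposition Lemma / approximate differentiability of $u$). At $|D^su|$-points, the blow-up produces a sequence whose rescaled gradients concentrate; since $v$ is absolutely continuous, the rescaled $v_h$'s mass on the blow-up cell vanishes relative to the gradient mass, forcing the first argument to $0$, and one obtains $(\mathcal{CQ}f)^\infty(0, \cdot)$ using $(H_2)_\infty$ to pass from $f$ to its recession function uniformly (the constant $C_M'$ being fixed because $|v_h|\le M$).

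The main obstacle will be the treatment of the singular set in the lower bound, specifically the decoupling of $v$ from the concentrating gradient during the blow-up at $|D^su|$-a.e.\ point. One must show rigorously that the contribution of $v_h$ to the rescaled energy on the small cube $Q(x_0,\varepsilon)$ becomes negligible compared to the $|D^su|$-mass there, so that only $(\mathcal{CQ}f)^\infty$ evaluated with first argument $0$ survives; the quantitative control here is precisely what $(H_2)_\infty$ is designed to supply, via the term $|\xi|^{1-\tau}/t^\tau$ which vanishes as $t\to\infty$ uniformly in $|b|\le M$. A secondary technical point is justifying that $(\mathcal{CQ}f)^\infty(0,\cdot) = \mathcal{CQ}(f^\infty)(0,\cdot)$ under $(H_1)_\infty$, $(H_2)_\infty$, so that the lower-bound blow-up (which most naturally produces $\mathcal{CQ}(f^\infty)$) matches the upper-bound construction (which produces $(\mathcal{CQ}f)^\infty$); this identification is the content of Proposition \ref{CQf=CQf} and Remark \ref{remCQfinfty}, which I would invoke directly. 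Once these two points are in place, combining the matching upper and lower bounds and a routine subsequence argument (since the $\liminf$ in \eqref{relaxedinfty} is attained along some sequence) completes the proof.
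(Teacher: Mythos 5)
Your overall architecture is the same as the paper's: the paper proves Theorem \ref{MainResultinfty} by running the blow-up lower bound of Theorem \ref{maintheorem} and the mollification upper bound of Theorem \ref{uppergeneralp} with $(H_1)_\infty$, $(H_2)_\infty$ in place of the $p$-growth hypotheses, and then removes $(H_0)$ via Proposition \ref{relaxedp=relaxedp}. However, there is a genuine gap in your treatment of the singular part of the lower bound. You claim that ``the rescaled $v_h$'s mass on the blow-up cell vanishes relative to the gradient mass, forcing the first argument to $0$.'' This is not the mechanism, and as stated the step fails: the quantity that enters the first slot after blow-up is the cell average $\int_Q v_h\,dy=\varrho_h^{-N}\int_{Q(x_0,\varrho_h)}v\,dx$, which is normalized by $\varrho_h^N$, not by $|Du|(Q(x_0,\varrho_h))$; in the $L^\infty$ setting it is merely bounded by $M=\|v\|_{L^\infty}$ and has no reason to tend to $0$. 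The vanishing of $\int_{Q(x_0,\varrho)}|v|\,dx\big/|Du|(Q(x_0,\varrho))$ (the analogue of \eqref{0derivativev}, which here is immediate from the $L^\infty$ bound) only kills the \emph{error} terms of the form $c_h^{-1}\int|v_h|$. The reason one may write $0$ in the first argument is entirely different: by Alberti's rank-one theorem $\frac{dD^su}{d|D^su|}(x_0)=\eta\otimes\nu$ has rank one, and by Remark \ref{finftyFKP1} iii) (from \cite[Lemma 2.2]{FKP1}) the recession function $f^\infty(\cdot,\xi)$ is constant in $b$ whenever ${\rm rank}\,\xi\le 1$, so $f^\infty\bigl(\int_Q v_h\,dy,\eta\otimes\nu\bigr)=f^\infty(0,\eta\otimes\nu)$. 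This rank-one constancy is the missing idea; note that in the $L^\infty$ setting, unlike the $L^p$ case (item 3 of Proposition \ref{proprecession}), the constancy in $b$ is available \emph{only} on rank-one matrices, so invoking it requires the rank-one structure of the singular blow-up explicitly.

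A secondary problem: you propose to invoke Proposition \ref{CQf=CQf} ``under $(H_1)_\infty$, $(H_2)_\infty$'' to identify $({\cal CQ}f)^\infty$ with ${\cal CQ}(f^\infty)$, but Remark \ref{remCQfinfty} states precisely that this proof cannot be carried out under those hypotheses (no $L^p$ bound on $b+\eta_t$ is available); the equality is established only under the stronger assumptions \eqref{H1_1inftyCQf} and \eqref{H2_inftyCQf}. You should either arrange the lower-bound blow-up so that it directly produces $({\cal CQ}f)^\infty$ (by first replacing $f$ with ${\cal CQ}f$ via Proposition \ref{relaxedp=relaxedp}, which for $p=\infty$ additionally requires that ${\cal CQ}f$ inherits $(H_1)_\infty$ and $(H_2)_\infty$), or strengthen the hypotheses to \eqref{H1_1inftyCQf}--\eqref{H2_inftyCQf} before using the identification.
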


For the case $1<p<\infty$, the proof of the lower bound is presented in Theorem \ref{maintheorem}  while the upper bound is in Theorem \ref{uppergeneralp}, both under the extra hypothesis 
\begin{itemize}
\item [$(H_0)$]$f$ is convex-quasiconvex.
\end{itemize}  
The case $p=\infty$ is discussed in subsection \ref{relaxation}.
Furthermore, we observe that Proposition \ref{relaxedp=relaxedp} in subsection \ref{qcx-remove} is devoted to remove the convexity-quasiconvexity assumption on $f$.

\section{Notations preliminaries and properties of the energy densities}\label{notpre}
In this section, we start by establishing notations, recalling some preliminary results on measure theory that will be useful through the paper and finally we recall the space of functions of bounded variation. 
\noindent Then we deduce the main properties of convex-quasiconvex functions, recession functions and related envelopes. 

\noindent If $\nu \in \mathbb S^{N-1}$ and $\{\nu,\nu_2,\dots,\nu_N\}$ is an orthonormal basis of $\mathbb R^N$, $Q_\nu$ denotes the unit
cube centered at the origin with its faces either parallel or orthogonal to $\nu,\nu_2,\dots,\nu_N$. If $x \in \mathbb R^N$ and $\rho>0$, we set $Q(x,\rho):=x+\rho \, Q$ and $Q_\nu(x,\rho):=x+\rho \, Q_\nu$, $Q$ is the  cube $\left(-\frac{1}{2}, \frac{1}{2}\right)^N$. 

Let $\Omega$ be a generic open subset of $\mathbb R^N$, we denote by
${\cal M}(\Omega)$ the space of all signed Radon measures in $\Omega$ with bounded
total variation. By the Riesz Representation Theorem, ${\cal M}(\Omega)$ can
be identified to the dual of the separable space $C_0(\Omega)$ of
continuous functions on $\Omega$ vanishing on the boundary $\partial
\Omega$. The $N$-dimensional Lebesgue measure in $\mathbb R^N$ is designated
as ${\cal L}^N$.
 
If $\mu \in {\cal M}(\Omega)$ and $\lambda \in {\cal M}(\Omega)$ is a
nonnegative Radon measure, we denote by $\frac{d\mu}{d\lambda}$ the
Radon-Nikod\'ym derivative of $\mu$ with respect to $\lambda$. By a
generalization of the Besicovich Differentiation Theorem (see
\cite[Proposition 2.2]{ADM}), it can be proved that there exists a
Borel set $E \subset \Omega$ such that $\lambda(E)=0$ and
$$\frac{d\mu}{d\lambda}(x)=\lim_{\rho \to 0^+} \frac{\mu(x+\rho \, C)}{\lambda(x+\rho \, C)}$$
for all $x \in {\rm Supp }\, \lambda \setminus E$ and any open bounded convex
set $C$ containing the origin.

\noindent We recall that the exceptional set E above does not depend on C. An immediate corollary is the generalization of Lebesgue-Besicovitch Differentiation
Theorem given below.
\begin{theorem}\label{thm2.8FM2}
If $\mu$ is a nonnegative Radon measure and if $f \in L^1_{\rm loc}(\mathbb R^N,\mu)$ then
$$
\lim_{\e \to 0^+} \frac{1}{\mu(x+ \e C)}\int_{x+ \e C} | f(y) - f ( x ) | d\mu(y) =0
$$
for $\mu$- a.e. $ x\in \mathbb R^N$ and for every, bounded, convex, open set $C$ containing the
origin.
\end{theorem}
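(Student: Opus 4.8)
The plan is to derive Theorem~\ref{thm2.8FM2} from the generalized Besicovitch differentiation result recalled just before it, namely \cite[Proposition 2.2]{ADM}, by the classical reduction to a countable dense set of values. First I would fix a countable set $D$ dense in the range space of $f$ (for instance $\mathbb{Q}$, or $\mathbb{Q}^d$ if $f$ is vector valued) and, for each $c\in D$, introduce the nonnegative Radon measure $\mu_c:=|f(\cdot)-c|\,\mu$. This is well defined since $|f-c|\in L^1_{\rm loc}(\mathbb{R}^N,\mu)$; moreover $\mu_c$ is absolutely continuous with respect to $\mu$, and its Radon--Nikod\'ym derivative satisfies $\frac{d\mu_c}{d\mu}(x)=|f(x)-c|$ for $\mu$-a.e.\ $x$.

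Applying \cite[Proposition 2.2]{ADM} to the pair $(\mu_c,\mu)$ (after restricting, if necessary, to a large ball to reduce to the finite-mass setting, which is harmless since the limit below is local in $x$) produces a Borel set $E_c$ with $\mu(E_c)=0$ such that
$$
\lim_{\e\to 0^+}\frac{1}{\mu(x+\e C)}\int_{x+\e C}|f(y)-c|\,d\mu(y)=|f(x)-c|
$$
for every $x\in{\rm Supp}\,\mu\setminus E_c$ and \emph{every} bounded, convex, open set $C$ containing the origin; the point I would lean on here is the remark in the excerpt that this exceptional set does not depend on $C$. Setting $E:=\bigl(\mathbb{R}^N\setminus{\rm Supp}\,\mu\bigr)\cup\bigcup_{c\in D}E_c$ we get $\mu(E)=0$, since $D$ is countable and $\mu$ is concentrated on its support (which also guarantees $\mu(x+\e C)>0$ for all $x\in{\rm Supp}\,\mu$ and all $\e>0$, so that the quotients make sense).

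Finally, I would fix $x\notin E$ and $\eta>0$, pick $c\in D$ with $|f(x)-c|<\eta$, and use the triangle inequality
$$
\frac{1}{\mu(x+\e C)}\int_{x+\e C}|f(y)-f(x)|\,d\mu(y)\ \le\ \frac{1}{\mu(x+\e C)}\int_{x+\e C}|f(y)-c|\,d\mu(y)+|c-f(x)|
$$
to conclude that $\limsup_{\e\to 0^+}$ of the left-hand side is at most $|f(x)-c|+|c-f(x)|<2\eta$, uniformly over the admissible $C$; letting $\eta\to0$ gives the assertion. I do not expect a genuine obstacle here: the only subtle point is that the null set must be common to all convex sets $C$, which is exactly the refinement of Besicovitch's theorem quoted before Theorem~\ref{thm2.8FM2}, the localization to bounded sets being routine and the rest being the standard density argument.
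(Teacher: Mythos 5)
Your proposal is correct and is exactly the argument the paper has in mind: the paper gives no explicit proof, presenting the theorem as an ``immediate corollary'' of the generalized Besicovitch differentiation result \cite[Proposition 2.2]{ADM} together with the remark that the exceptional set is independent of $C$, and your countable-dense-set reduction with the measures $\mu_c=|f(\cdot)-c|\,\mu$ and the triangle inequality is precisely the standard way to make that deduction rigorous. The two points you flag --- the $C$-independence of the null set and the harmless localization to finite mass --- are indeed the only places where care is needed, and you handle both correctly.
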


\begin{definition}
A function $u\in L^{1}(\Omega;{\mathbb{R}}^{n})$ is said to be of
\emph{bounded variation}, and we write $u\in BV(\Omega;{\mathbb{R}}^{n})$, if
all its first distributional derivatives, $D_{j}u_{i}$, belong to $\mathcal{M}%
(\Omega)$ for $1\leq i\leq n$ and $1\leq j\leq N$.
\end{definition}

The matrix-valued measure whose entries are $D_{j}u_{i}$ is denoted by $Du$
and $|Du|$ stands for its total variation.
We observe that if $u\in BV(\Omega;\mathbb{R}^n)$ then $u\mapsto|Du|(\Omega)$ is lower
semicontinuous in $BV(\Omega;\mathbb{R}^n)$ with respect to the
$L_{\mathrm{loc}}^{1}(\Omega;\mathbb{R}^n)$ topology.

By the Lebesgue Decomposition Theorem we can split $Du$
into the sum of two mutually singular measures $D^a u$ and $D^s u$,
where $D^a u$ is the absolutely continuous part and $D^s u$ is the singular part of $Du$ with respect to the Lebesgue measure ${\cal L}^N$. By $\nabla u$ we denote the Radon-Nikod\'ym derivative of $D^au$ with respect to the Lebesgue measure so that we can write $$Du=\nabla u {\cal L}^N + D^s u.$$

\begin{proposition}
\label{thm2.3BBBF} If $u\in BV\left(  \Omega;\mathbb{R}^{n}\right)  $ then
for $\mathcal{L}^{N}-$a.e. $x_{0}\in\Omega$%
\begin{equation}
\lim_{\varepsilon\rightarrow0^{+}}\frac{1}{\varepsilon}\left\{  \frac
{1}{\mathcal{\varepsilon}^{N}}\int_{Q\left(  x_{0},\varepsilon\right)
}\left\vert u(x)-u(x_{0})-\nabla u\left(  x_{0}\right)  \cdot(x-x_{0}%
)\right\vert ^{\frac{N}{N-1}}dx\right\}  ^{\frac{N-1}{N}}=0.
\label{approximate differentiability}%
\end{equation}

\end{proposition}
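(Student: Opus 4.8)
The plan is to deduce this from two classical facts about $BV$ functions: (i) $\mathcal L^N$-a.e.\ point $x_0\in\Omega$ is a point of approximate differentiability of $u$ in the usual sense, i.e.\ there is a matrix $\nabla u(x_0)$ (the Radon--Nikod\'ym derivative of $D^au$, consistent with the notation already fixed in the excerpt) such that the difference quotients $u(x_0+\varepsilon y)$ converge to $u(x_0)+\nabla u(x_0)\cdot y$ in $L^1(Q)$ after rescaling; and (ii) the rescaled functions enjoy a uniform $BV$ bound on the unit cube, so that the Sobolev--Poincar\'e embedding $BV(Q)\hookrightarrow L^{N/(N-1)}(Q)$ upgrades the $L^1$ convergence to convergence in $L^{N/(N-1)}$. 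I would carry this out as follows.

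First I would fix $x_0$ to be simultaneously a point where $\nabla u(x_0)$ exists as the Lebesgue--Besicovitch derivative of $D^au$ (Theorem \ref{thm2.8FM2} applied to $\mu=\mathcal L^N$ and $f=\nabla u\in L^1_{\mathrm{loc}}$), a point where $|D^su|(Q(x_0,\varepsilon))/\varepsilon^N\to0$ (true $\mathcal L^N$-a.e.\ since $D^su\perp\mathcal L^N$), and a Lebesgue point of $u$. For such $x_0$, introduce the rescaled functions
\begin{equation}\nonumber
u_\varepsilon(y):=\frac{u(x_0+\varepsilon y)-u(x_0)-\varepsilon\,\nabla u(x_0)\cdot y}{\varepsilon},\qquad y\in Q.
\end{equation}
A change of variables gives $Du_\varepsilon(Q)=\varepsilon^{-N}Du(Q(x_0,\varepsilon))-\nabla u(x_0)\mathcal L^N(Q)$, hence $|Du_\varepsilon|(Q)\le \varepsilon^{-N}|D^au|(Q(x_0,\varepsilon))-(\text{contribution of }\nabla u(x_0))+\varepsilon^{-N}|D^su|(Q(x_0,\varepsilon))$; the absolutely continuous terms tend to $0$ by the Lebesgue point property of $\nabla u$ at $x_0$, and the singular term tends to $0$ by the choice of $x_0$. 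So $|Du_\varepsilon|(Q)\to0$.

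Next, the quantity in \eqref{approximate differentiability} is exactly $\|u_\varepsilon - c_\varepsilon\|_{L^{N/(N-1)}(Q)}$ with $c_\varepsilon=0$; more precisely it equals $\big(\int_Q|u_\varepsilon|^{N/(N-1)}\big)^{(N-1)/N}$. To bound this I would first pass to $u_\varepsilon - \fint_Q u_\varepsilon$ and invoke the Poincar\'e--Wirtinger inequality in $BV$: $\|u_\varepsilon-\fint_Q u_\varepsilon\|_{L^{N/(N-1)}(Q)}\le C_N|Du_\varepsilon|(Q)\to0$. It then remains to show the means $\fint_Q u_\varepsilon$ themselves tend to $0$; this follows because $\fint_Q u_\varepsilon=\varepsilon^{-1}\big(\fint_{Q(x_0,\varepsilon)}u(x)\,dx - u(x_0)\big)$ up to the linear term which averages to zero by symmetry of $Q$, and the approximate differentiability of $u$ in the $L^1$ sense at $x_0$ (which is the statement that this normalized mean-oscillation vanishes, itself a consequence of the $BV$ structure theorem — Calder\'on--Zygmund — that $\mathcal L^N$-a.e.\ point is a point of approximate differentiability of a $BV$ function) forces $\fint_Q u_\varepsilon\to0$. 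Combining, $\|u_\varepsilon\|_{L^{N/(N-1)}(Q)}\to0$, which is the claim.

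The main obstacle, and the only delicate point, is step three: producing the conclusion that the normalized averages $\varepsilon^{-1}(\fint_{Q(x_0,\varepsilon)}u-u(x_0)-\text{linear part})\to0$ for $\mathcal L^N$-a.e.\ $x_0$. The uniform $BV$ bound alone gives, via Poincar\'e, control of $u_\varepsilon$ up to an additive constant, but not that the constant is negligible at rate $o(1)$; one genuinely needs the classical approximate-differentiability theorem for $BV$ functions (e.g.\ Ambrosio--Fusco--Pallara, Theorem 3.83, or Evans--Gariepy), which is precisely the fact that $\nabla u$ as defined above is the approximate differential $\mathcal L^N$-a.e. Once that is granted, everything else is the routine change of variables plus the Sobolev embedding recorded above, and I would simply cite the $BV$ approximate-differentiability result rather than reprove it.
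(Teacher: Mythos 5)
The paper offers no proof of this proposition: it is recalled as a classical preliminary (essentially the Calder\'on--Zygmund approximate differentiability theorem in its $L^{N/(N-1)}$ form), with the reader referred to \cite{AFP} for details. Your argument is the standard textbook proof of that fact and is correct: the rescaling identity showing the quantity in \eqref{approximate differentiability} equals $\|u_\varepsilon\|_{L^{N/(N-1)}(Q)}$, the bound $|Du_\varepsilon|(Q)\le \varepsilon^{-N}\int_{Q(x_0,\varepsilon)}|\nabla u-\nabla u(x_0)|\,dx+\varepsilon^{-N}|D^su|(Q(x_0,\varepsilon))\to 0$ at a.e.\ $x_0$, and the Sobolev--Poincar\'e inequality in $BV$ all check out, and you correctly identify that the control of the averages $\fint_Q u_\varepsilon$ is the one step that cannot be extracted from the uniform $BV$ bound alone and must come from the $L^1$ approximate differentiability theorem, which it is legitimate to cite. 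Since the paper itself simply quotes the result, there is no authorial proof to compare against; your write-up is a valid self-contained derivation modulo that one classical citation.
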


For more details regarding functions of bounded variation we refer to \cite{AFP}.

\subsection{Convex-quasiconvex functions}\label{remove}

We start by recalling the notion of convex-quasiconvex function, presented in
\cite{FKP1} (see also \cite{FKP2} and \cite{LDR}).

\begin{definition}
A Borel measurable function $f:\mathbb{R}^{m}\times\mathbb{R}^{n\times
N}\rightarrow\mathbb{R}$ is said to be convex-quasiconvex if, for every $(b,\xi)\in\mathbb{R}^{m}\times\mathbb{R}^{n\times N}$, there exists a
bounded open set $D$ of $\mathbb{R}^{N}$ such that
\begin{equation}
f(b,\xi)\leq\frac{1}{|D|}\int_{D}f(b+\eta(x),\xi+\nabla\varphi(x))\,dx,
\label{cross-qcx}%
\end{equation}
for every
$\eta\in L^{\infty}(D;\mathbb{R}^{m})$, with ${\int_{D}\eta(x)\,dx=0}$, and
for every $\varphi\in W_{0}^{1,\infty}(D;\mathbb{R}^{n})$.
\end{definition}

\begin{remark}

\begin{enumerate}
\item[i)] It can be easily seen that, if $f$ is convex-quasiconvex then
condition $\left(  \ref{cross-qcx}\right)  $ is true for any bounded open set
$D\subset\mathbb{R}^{N}.$

\item[ii)] A convex-quasiconvex function is separately convex.

\item[iii)] The growth condition from above in $(H_1)_p$, ii) and \cite[Proposition 2.11]{CRZ1}, entail that there exists $\gamma >0$ such that
\begin{equation}
\left\vert f\left(  b,\xi\right)  -f\left(  b^{\prime},\xi^{\prime
}\right)  \right\vert \leq\gamma\left(  \left\vert \xi-\xi^{\prime}\right\vert
+\left(  1+\left\vert b\right\vert ^{p-1}+\left\vert b^{\prime}\right\vert
^{p-1}+\left\vert \xi\right\vert ^{\frac{1}{p^{\prime}}}+\left\vert
\xi^{\prime}\right\vert ^{\frac{1}{p^{\prime}}}\right)  \left\vert
b-b^{\prime}\right\vert \right)  \label{p-lipschitzcontinuity}%
\end{equation}
for every $b,~b^{\prime}\in\mathbb{R}^{m},$ $\xi,~\xi^{\prime}\in
\mathbb{R}^{n\times N},$
where $p>1$ and $p'$ its conjugate exponent.

\item[iv)] In case of growth
conditions expressed by $(H_1)_{\infty}$ (see \cite[Proposition 4]{RZ}), ii) entails that,
given $M>0$ there exists a constant
$\beta\left(  M,n,m,N\right)  $ such that%
\begin{equation}
\left\vert f\left( b,\xi\right)  -f\left(  b^{\prime},\xi^{\prime
}\right)  \right\vert \leq\beta\left(  1+\left\vert \xi\right\vert +\left\vert
\xi^{\prime}\right\vert \right)  \left\vert b-b^{\prime}\right\vert
+\beta\left\vert \xi-\xi^{\prime}\right\vert \label{infty-lipschitzcontinuity}%
\end{equation}
for every $b,~b^{\prime}\in\mathbb{R}^{m},$ such that $\left\vert b\right\vert
\leq M$ and $\left\vert b^{\prime}\right\vert \leq M,$ for every $\xi
,~\xi^{\prime}\in\mathbb{R}^{n\times N}.$

\end{enumerate}
\end{remark}

We introduce the notion of convex-quasiconvex envelope of a function, which is crucial to deal with the relaxation procedure.

\begin{definition}\label{CQenv}
Let $f:\mathbb{R}^{m}\times\mathbb{R}^{n\times N}\rightarrow\mathbb{R}$ be a
Borel measurable function bounded from below. The convex-quasiconvex envelope
is the largest convex-quasiconvex function below $f,$ i.e.,%
\[
{\cal CQ}f\left(  b,\xi\right)  :=\sup\left\{  g\left(  b,\xi\right)  :g\leq
f,~g\text{ convex-quasiconvex}\right\}  .
\]

\end{definition}
 By Theorem 4.16 in \cite{LDR}, the convex-quasiconvex envelope coincides with the so called convex-quasiconvexification
\begin{equation}
\begin{array}{ll}
{\cal CQ}f(b,\xi)=\inf \left\{\displaystyle{\frac{1}{|D|}\int_{D}f(b+\eta(x),\xi+\nabla
\varphi(x))\,dx:} \right.\displaystyle{\eta\in L^{\infty}\left(  D;\mathbb{R}^{m}\right)}  ,&\displaystyle{\int_{D}\eta(x)  dx=0,} \\
&\left. \varphi\in W_{0}^{1,\infty}(
D;\mathbb{R}^{n})  \right\}  .
\end{array}\label{CQ envelope}%
\end{equation}
As for convexity-quasiconvexity, condition \eqref{CQ envelope}
can be stated for any bounded open set $D\subset\mathbb{R}^{N}.$ It can also be showed that if $f$ satisfies a growth condition of type $(H_1)_p$ then in \eqref{cross-qcx} and \eqref{CQ envelope} the spaces $L^{\infty}$ and $W_{0}^{1,\infty}$ can be replaced by $L^{p}$ and $W_{0}^{1,1},$ respectively.

The following proposition, that will be exploited in the sequel, can be found in \cite[Proposition 5]{RZ}. The proof is omitted since it is very similar to \cite[Proposition 2.1]{RZCh}.

\begin{proposition}\label{H12pCQf}
Let $f:\mathbb{R}^{m}\times\mathbb{R}^{n\times
N}\rightarrow\left[  0,\infty\right)  $ be a continuous function satisfying
$\left(  H_{1}\right)  _{p}$. Then ${\cal CQ}f$ is
continuous and satisfies $\left(  H_{1}\right)  _{p}.$ Consequently, ${\cal CQ}f$ satisfies \eqref{p-lipschitzcontinuity}.
\end{proposition}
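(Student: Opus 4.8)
The plan is to verify the three assertions in turn: that ${\cal CQ}f$ inherits the growth condition $(H_1)_p$, that it is continuous, and that consequently the $p$-Lipschitz estimate \eqref{p-lipschitzcontinuity} holds. First I would check that ${\cal CQ}f$ satisfies $(H_1)_p$. The upper bound is immediate: taking $\eta\equiv 0$ and $\varphi\equiv 0$ in the infimum \eqref{CQ envelope} shows ${\cal CQ}f(b,\xi)\le f(b,\xi)\le C(1+|b|^p+|\xi|)$. For the lower bound I would argue that the affine-type function $(b,\xi)\mapsto \frac1C(|b|^p+|\xi|)-C$ — or rather a convex-quasiconvex minorant of it — lies below $f$; the cleanest route is to note that $b\mapsto\frac1C|b|^p$ is convex and $\xi\mapsto\frac1C|\xi|$ is convex hence quasiconvex, so their sum minus $C$ is convex-quasiconvex and $\le f$ by $(H_1)_p$, whence it is $\le {\cal CQ}f$ by the maximality in Definition \ref{CQenv}. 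This gives $(H_1)_p$ for ${\cal CQ}f$.

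Next, continuity. Since ${\cal CQ}f$ is convex-quasiconvex, Remark ii) following the definition of convex-quasiconvex function tells us it is separately convex, and it is finite-valued by the two-sided bound just established. A separately convex function that is locally bounded is automatically locally Lipschitz, hence continuous; this is the standard fact (invoked via \cite[Proposition 2.11]{CRZ1} in Remark iii)) that separate convexity plus local boundedness upgrades to local Lipschitz continuity. So continuity of ${\cal CQ}f$ follows from $(H_1)_p$ for ${\cal CQ}f$ together with separate convexity.

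Finally, the estimate \eqref{p-lipschitzcontinuity} for ${\cal CQ}f$ follows by exactly the same mechanism that produces it for $f$ in Remark iii): the growth bound from above in $(H_1)_p$, separate convexity, and \cite[Proposition 2.11]{CRZ1}. Since we have now shown ${\cal CQ}f$ enjoys all three of these properties, the quantitative modulus \eqref{p-lipschitzcontinuity} applies verbatim to ${\cal CQ}f$. I do not expect a genuine obstacle here — the only point requiring a little care is the lower bound in $(H_1)_p$, where one must exhibit an explicit convex-quasiconvex function sitting below $f$ and use the maximality characterization of the envelope rather than the infimum formula \eqref{CQ envelope}; everything else is a direct transcription of the cited results applied to ${\cal CQ}f$ in place of $f$.
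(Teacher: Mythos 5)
Your argument is correct and is exactly the standard route that the paper delegates to the cited references ([RZ, Proposition 5], [RZCh, Proposition 2.1]): the upper bound from the infimum formula \eqref{CQ envelope} with $\eta=0$, $\varphi=0$; the lower bound by checking that $(b,\xi)\mapsto \frac{1}{C}(|b|^{p}+|\xi|)-C$ is convex-quasiconvex (Jensen in $b$, quasiconvexity of the convex function $\xi\mapsto|\xi|$) and invoking maximality in Definition \ref{CQenv}; then separate convexity plus the two-sided growth to get local Lipschitz continuity and \eqref{p-lipschitzcontinuity} via \cite[Proposition 2.11]{CRZ1}. No gaps.
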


In order to deal with $v \in L^\infty(\Omega;\mathbb{R}^{m})$ and to compare with the result in $BV \times L^p$, $1<p <\infty$, one can consider a different setting of assumptions on the energy density $f$.
 
\noindent Namely, following \cite[Proposition 6 and Remark 7]{RZ},
if $\alpha:[0, \infty) \to [0,\infty)$ is a convex and increasing function, such that $\alpha(0)=0$ and if
$f: \mathbb R^m \times \mathbb R^{n \times N} \to \mathbb [0, \infty)$
is a continuous function satisfying
\begin{equation}\label{H1_1inftyCQf}
\frac{1}{C}(\alpha(|b|)+ |\xi|)-C \leq f(b,\xi) \leq C(1+ \alpha(|b|)+ |\xi|)
\end{equation}
for every $(b,\xi)\in \mathbb R^m \times \mathbb R^{n \times N}$,
then ${\cal CQ}f$ satisfies a condition analogous to \eqref{H1_1inftyCQf}. Moreover, ${\cal CQ}f$ is a continuous function.

\noindent Analogously, one can assume that $f$ satisfies the following variant of $(H_2)_\infty$: there exist $c'>0, L>0,0<\tau\leq1$ such
that
\begin{equation}
t>0,~\xi\in\mathbb{R}^{n\times N},\text{ with }t\left\vert \xi\right\vert
>L\Longrightarrow\left\vert \frac{f\left(b,t\xi\right)  }{t}-f^{\infty
}\left(  b,\xi\right)  \right\vert \leq c^{\prime}\left(  
\frac{\alpha(|b|)+1}{t}  +\frac{\left\vert \xi\right\vert
^{1-\tau}}{t^{\tau}}\right).
\label{H2_inftyCQf}
\end{equation}

\noindent We observe that, if from one hand \eqref{H1_1inftyCQf} and \eqref{H2_inftyCQf} generalize $(H_1)_p$ and $(H_2)_p$ respectively, from the other hand they can be regarded also as a stronger version of $(H_1)_{\infty}$ and $(H_2)_{\infty},$ respectively.

\subsection{The recession function}

Let $f: \mathbb R^m \times \mathbb R^{n \times N}\to [0, \infty[$, and let $f^\infty: \mathbb R^m \times \mathbb R^{n \times N}\to [0, \infty[$  be its recession function, defined in \eqref{recession}.

\noindent The following properties are an easy consequence of the definition of
recession function and conditions $(H_0)$, $(H_1)_p$ and $(H_2)_p$, when $1<p<\infty$. %( $(H_0),(H_1)_\infty $ and $(H_2)_\infty$, if $p=\infty$).

\begin{proposition}
\label{proprecession}

Provided $f$ satisfies $(H_0)$, $(H_1) _p$ and $(H_2)_{p}$, then 
\begin{enumerate}
\item $f^\infty$ is convex-quasiconvex;

\item there exists $C >0$ such that%
\begin{equation}\label{finftygrowth}
\frac{1}{C}\left\vert \xi\right\vert \leq f^{\infty}\left(b,\xi\right)
\leq C\left\vert \xi\right\vert;
\end{equation}
\item $f^{\infty}(b,\xi)$ is constant with respect to $b$ for every $\xi\in \mathbb R^{n \times N}$;

\item $f^\infty$ is continuous.

\end{enumerate}
\end{proposition}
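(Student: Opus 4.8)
The plan is to verify the four assertions essentially in the order listed, since each uses the preceding ones. First I would establish item 2, the linear growth of $f^\infty$ in $\xi$. Writing the definition $f^\infty(b,\xi)=\limsup_{t\to\infty} f(b,t\xi)/t$ and applying the lower and upper bounds of $(H_1)_p$ to $f(b,t\xi)$, one gets $\frac{1}{C}(|b|^p/t + |\xi|) - C/t \le f(b,t\xi)/t \le C(1/t + |b|^p/t + |\xi|)$; letting $t\to\infty$ the terms with $1/t$ vanish and \eqref{finftygrowth} follows, in particular the $\limsup$ is finite and $f^\infty$ is real-valued (with $\xi=0$ one reads off $f^\infty(b,0)=0$, consistent with the bounds). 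For item 1, I would use that ${\cal CQ}$-convexity as in \eqref{cross-qcx} is a closed condition under the relevant limits: for each fixed $t>0$ the function $(b,\xi)\mapsto f(b,t\xi)/t$ is convex-quasiconvex (it is an affine-in-argument reparametrisation of $f$, which is convex-quasiconvex by $(H_0)$, and one checks the defining inequality directly, replacing $\eta$ by $\eta$ and $\nabla\varphi$ by $t\nabla\varphi$ and dividing by $t$). Then $f^\infty$, being a pointwise $\limsup$ of convex-quasiconvex functions, inherits the inequality \eqref{cross-qcx}: one passes to the limit in $f(b+\eta(x),t\xi+t\nabla\varphi(x))/t \ge f(b,t\xi)/t$ using Fatou on the left and taking $\limsup$ on the right, with the integrability guaranteed by the bound from item 2 applied along the sequence.

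For item 3, the constancy in $b$, I would exploit the $p$-Lipschitz estimate \eqref{p-lipschitzcontinuity}, which holds for $f$ by Remark iii). Applied to $f(b,t\xi)$ and $f(b',t\xi)$ it gives
\[
\left| \frac{f(b,t\xi)}{t} - \frac{f(b',t\xi)}{t}\right| \le \frac{\gamma}{t}\Big(1+|b|^{p-1}+|b'|^{p-1} + 2\,t^{1/p'}|\xi|^{1/p'}\Big)|b-b'|.
\]
The first group of terms is $O(1/t)\to 0$; the term $t^{1/p'}|\xi|^{1/p'}/t = |\xi|^{1/p'} t^{1/p'-1} = |\xi|^{1/p'} t^{-1/p}\to 0$ since $p>1$. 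Hence $f^\infty(b,\xi)=f^\infty(b',\xi)$ for all $b,b'$, and one may write $f^\infty(b,\xi)=f^\infty(0,\xi)$. (Here $(H_2)_p$ is an alternative route: it bounds $|f(b,t\xi)/t - f^\infty(b,\xi)|$ uniformly in a way that also forces the limit to be $b$-independent, and it simultaneously upgrades the $\limsup$ in \eqref{recession} to a genuine limit — a fact worth recording since it is used elsewhere.) Finally, item 4: by item 3 it suffices to show $\xi\mapsto f^\infty(0,\xi)$ is continuous; but a convex-quasiconvex function is separately convex (Remark ii), hence, being finite-valued on all of $\mathbb R^{n\times N}$ with the linear bound from item 2, it is locally Lipschitz, thus continuous. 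Alternatively continuity follows directly from \eqref{p-lipschitzcontinuity} passed to the limit, which yields $|f^\infty(b,\xi)-f^\infty(b,\xi')|\le\gamma|\xi-\xi'|$.

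The only genuinely delicate point is item 1, the passage to the limit in the convex-quasiconvexity inequality: one must be careful that the $\limsup$ of the right-hand side $f(b,t\xi)/t$ and the $\liminf$ (via Fatou) of the left-hand side integrand are taken along a \emph{common} sequence $t_k\to\infty$ realising the $\limsup$ that defines $f^\infty(b,\xi)$, and that the upper bound from item 2, namely $f(b+\eta(x),t_k\xi+t_k\nabla\varphi(x))/t_k \le C|\xi+\nabla\varphi(x)|$, provides the domination needed to justify interchanging limit and integral (or, more precisely, to apply Fatou's lemma to the nonnegative integrands). Everything else is a direct manipulation of the growth and Lipschitz bounds already recorded in the excerpt.
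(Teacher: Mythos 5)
Your proposal is correct and follows essentially the same route as the paper: item 2 by applying $(H_1)_p$ to $f(b,t\xi)/t$ and letting $t\to\infty$, item 3 by dividing the $p$-Lipschitz estimate \eqref{p-lipschitzcontinuity} (valid for $f$ by $(H_0)$, $(H_1)_p$ and Remark iii)) by $t$ and observing that $t^{1/p'-1}=t^{-1/p}\to 0$, and item 4 from the Lipschitz estimate inherited by $f^\infty$; the only difference is that for item 1 the paper simply cites \cite[Lemma 2.1]{FKP1}, whereas you spell the argument out. One small correction there: to pass to the limit in $\frac{1}{|D|}\int_D f(b+\eta,t_k\xi+t_k\nabla\varphi)/t_k\,dx$ you need the inequality $\limsup_k\int\le\int\limsup_k$, i.e.\ the \emph{reverse} Fatou lemma (Fatou applied to $C\bigl(1+\|b+\eta\|_\infty^p\bigr)/t_k+C|\xi+\nabla\varphi|-f(\cdot)/t_k\ge 0$), not Fatou for the nonnegative integrands themselves, which gives the useless direction $\liminf\int\ge\int\liminf$; the uniform domination you already identified is exactly what makes this legitimate.
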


\begin{remark}
\label{hypmin}
We emphasize that not all the assumptions $(H_1)_p$ and $(H_2)_p$ 
in Proposition \ref{proprecession} are necessary to prove items above. In particular, one has that:
\begin{itemize}
\item[i)] The proof of $2.$  uses only the fact that $f$ satisfies $(H_1)_p$.
\item[ii)] To prove $3.$ it is necessary to require that $f$ satisfies only $(H_0)$ and $(H_1)_p$. In fact, under the assumptions that $f$ satisfies \eqref{p-lipschitzcontinuity} one can avoid to require $(H_0)$.
 
\end{itemize}
\end{remark}
\begin{proof}

\begin{enumerate}
\item The convexity-quasiconvexity of $f^\infty$ can be proven exactly as in \cite[Lemma 2.1]{FKP1}.

\item By definition $\left(  \ref{recession}\right)  $ we may find a
subsequence $\left\{  t_{k}\right\}  $ such that%
\[
f^{\infty}\left( b,\xi\right)  =\lim_{t_k\rightarrow\infty}\frac{f\left(
b,t_{k}\xi\right)  }{t_{k}}.
\]
By $\left(  H_{1}\right)  _{p}$ one has%
\[
f^{\infty}\left(  b,\xi\right)  \leq\lim_{t_k\rightarrow\infty}%
\frac{C\left(  1+\left\vert b\right\vert ^{p}+\left\vert t_{k}\xi\right\vert
\right)  }{t_{k}}=C\left\vert \xi\right\vert
\hbox{ and} 
f^{\infty}\left(  b,\xi\right)  \geq\lim_{t_k\rightarrow \infty}\frac
{\frac{1}{C}\left(  \left\vert b\right\vert ^{p}+\left\vert t_{k}%
\xi\right\vert \right)  -C}{t_{k}}\geq\frac{1}{C}\left\vert \xi\right\vert .
\]
Hence $\left(  H_{1}\right)  _{p}$ holds for $f^{\infty}.$
\item We start by observing that \eqref{finftygrowth} and 1. guarantee that $f^\infty$ satisfies \eqref{p-lipschitzcontinuity}. Let $\xi\in \mathbb R^{n \times N}$, and let $b, b' \in \mathbb R^m$, up to a subsequence, by \eqref{recession} and \eqref{p-lipschitzcontinuity} it results that, 
$$
\begin{array}{ll}
\displaystyle{f^\infty(b,\xi)- f^\infty( b', \xi)\leq \lim_{t_k \to \infty}\frac{f(b, t_k \xi)- f( b', t_k \xi)}{t_k}}
\displaystyle{\leq \lim_{t_k \to \infty}\frac{\gamma (1+ |b|^{p-1}+ |b'|^{p-1}+ |t_k \xi|^{\frac{1}{p'}})|b-b'|}{t_k}=0.}
\end{array}
$$
\end{enumerate}
By interchanging the role of $b$ and $b'$, it follows that $f^{\infty}( \cdot, \xi)$ is constant and this concludes the proof.
\end{proof}

\begin{remark}
\label{finftyFKP1}
Under assumptions $(H_0), (H_1)_{\infty}$ and $(H_2)_{\infty}$, $f^\infty$ satisfies properties analogous to those at the beginning of subsection ???
In particular in \cite[Lemma 2.1 and Lemma 2.2]{FKP1} it has been proved that
\begin{itemize}
\item[i)] $f^\infty$ is convex-quasiconvex;
\item[ii)] $\frac{1}{C_M} |\xi|\leq f^\infty(b,\xi) \leq C_M |\xi|$, for every $b$, with $|b|\leq M$;
\item[iii)] If ${\rm rank} \xi \leq 1$, then $f^\infty(b,\xi)$ is constant with respect to $b$.
\end{itemize}
\end{remark}

\begin{remark}
\label{propCQfinfty}
We observe that, if $f:\mathbb{R}^{m}\times\mathbb{R}^{n\times
N}\rightarrow\left[  0,\infty\right)  $ is a continuous function satisfying 
$(H_1)_p$ and $(H_2)_p,$ then the function $({\cal CQ}f)^\infty: \mathbb R^m \times \mathbb R^{n\times N} \to [0,\infty[$, obtained first taking the convex-quasiconvexification in \eqref{CQ envelope} of $f$ and then its recession through formula \eqref{recession} applied to ${\cal CQ}f$, satisfies the following properties:
\begin{enumerate}
\item $({\cal CQ}f)^\infty$ is convex-quasiconvex;
\item there exists $C>0$ such that $\frac{1}{C}|\xi|\leq ({\cal CQ}f)^\infty(b,\xi)\leq C|\xi|$, for every $(b,\xi) \in \mathbb R^m \times \mathbb R^{n \times N}$;
\item for every $\xi\in \mathbb R^{n \times N}$, $({\cal CQ}f)^\infty(\cdot,\xi)$ is constant, i.e. $({\cal CQ}f)^\infty$ is independent on $v$;
\item $({\cal CQ}f)^\infty$ is Lipschitz continuous in $\xi$.
%\item $({\cal CQ}f)^\infty$ verifies 4. in Proposition \ref{proprecession}.
\end{enumerate}

Under the same set of assumptions on $f$, one can prove that the convex-quasiconvexification of $f^\infty$, ${\cal CQ}(f^\infty)$, satisfies the following conditions:
\begin{enumerate}
\item[5.] ${\cal CQ}(f^\infty)$  is convex-quasiconvex;
\item[6.] there exists $C>0$ such that $\frac{1}{C}|\xi|\leq {\cal CQ}(f^\infty)(b,\xi)\leq C|\xi|$, for every $(b,\xi)\in\mathbb R^m \times \mathbb R^{n \times N}$;
\item[7.] for every $\xi\in \mathbb R^{n \times N}$, and assuming that $f$ satisfies \eqref{p-lipschitzcontinuity}, ${\cal CQ}(f^\infty)(\cdot,\xi)$ is constant, i.e. ${\cal CQ}(f^\infty)$ is independent on $b$;
\item[8.] ${\cal CQ}(f^\infty)$ is Lipschitz continuous in $\xi$.
\end{enumerate}
\end{remark}
The above properties are immediate consequences of Propositions \ref{H12pCQf}, \ref{proprecession} and \eqref{p-lipschitzcontinuity}. In particular $8.$ follows from $3.$ of Proposition \ref{proprecession}, without requiring $(H_2)_p$. 

On the other hand, Proposition \ref{CQf=CQf} below entails that ${\cal CQ}(f)^\infty$ is independent on $b$, without requiring that $f$ is Lipschitz continuous, but replacing this assumption with $(H_2)_p$.

\noindent We also observe that $({\cal CQ}f)^\infty$ and ${\cal CQ}(f^\infty)$ are only quasiconvex functions, since they are independent of $b$. In particular, in our setting, these functions coincide as it is stated below.

\begin{proposition}\label{CQf=CQf}
Let $f:\mathbb{R}^{m}\times\mathbb{R}^{n\times
N}\rightarrow\left[  0,\infty\right)  $ be a continuous function satisfying 
$(H_1)_p$ and $(H_2)_p.$ 

Then
\[
{\cal CQ}\left(  f^{\infty}\right)  \left(  b,\xi\right)  =\left(  {\cal CQ}f\right)
^{\infty}\left( b,\xi\right)  ~\ \ \ \ \ \ \text{for every }\left(
b,\xi\right)  \in\mathbb{R}^{m}%
\times\mathbb{R}^{n\times N}.
\]
 
\end{proposition}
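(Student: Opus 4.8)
I would prove the two inequalities $({\cal CQ}f)^\infty\le{\cal CQ}(f^\infty)$ and ${\cal CQ}(f^\infty)\le({\cal CQ}f)^\infty$ separately. Throughout I use that, by Remark \ref{propCQfinfty}, both functions are convex-quasiconvex, positively $1$-homogeneous in $\xi$, independent of $b$, and obey the $b$-independent bounds $\frac1C|\xi|\le ({\cal CQ}f)^\infty,{\cal CQ}(f^\infty)\le C|\xi|$ coming from \eqref{finftygrowth}; in particular it suffices to argue for a fixed value of $b$.

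\noindent\textbf{The inequality $({\cal CQ}f)^\infty\le{\cal CQ}(f^\infty)$.} Since ${\cal CQ}f\le f$ by Definition \ref{CQenv} and the recession operation \eqref{recession} is monotone, one gets $({\cal CQ}f)^\infty\le f^\infty$ pointwise. Now $({\cal CQ}f)^\infty$ is convex-quasiconvex (Remark \ref{propCQfinfty}) and sits below $f^\infty$, so the maximality of the convex-quasiconvex envelope among convex-quasiconvex minorants (Definition \ref{CQenv}) forces $({\cal CQ}f)^\infty\le{\cal CQ}(f^\infty)$. This is the inequality that every naive computation produces — either this direct monotonicity argument, or interchanging the recession limit with the infimum in \eqref{CQ envelope} along \emph{fixed} test fields — so the content of the proposition lies entirely in the reverse bound.

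\noindent\textbf{The inequality ${\cal CQ}(f^\infty)\le({\cal CQ}f)^\infty$.} Fix $\xi$ and $b$. For each $t>0$ pick near-optimal fields $(\eta_t,\varphi_t)$ in the representation \eqref{CQ envelope} of ${\cal CQ}f(b,t\xi)$, with $\int_D\eta_t=0$, and set $\psi_t:=\varphi_t/t$. Then $(\eta_t,\psi_t)$ is admissible in \eqref{CQ envelope} for ${\cal CQ}(f^\infty)(b,\xi)$, whence ${\cal CQ}(f^\infty)(b,\xi)\le\frac1{|D|}\int_D f^\infty(b+\eta_t,\xi+\nabla\psi_t)\,dx$. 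Writing $t\xi+\nabla\varphi_t=t(\xi+\nabla\psi_t)$ and invoking $(H_2)_p$ pointwise on $\{t|\xi+\nabla\psi_t|>L\}$, I would bound $f^\infty(b+\eta_t,\xi+\nabla\psi_t)\le\frac1t f(b+\eta_t,t(\xi+\nabla\psi_t))+C'\big(\frac{|b+\eta_t|^p+1}{t}+\frac{|\xi+\nabla\psi_t|^{1-\tau}}{t^\tau}\big)$, the excluded set being harmless by the linear bounds. Integrating and recalling that $(\eta_t,\varphi_t)$ almost realizes ${\cal CQ}f(b,t\xi)$, the leading term becomes $\frac1t{\cal CQ}f(b,t\xi)+o(1)$, which tends to $({\cal CQ}f)^\infty(b,\xi)$; the term $\frac{1}{t^\tau}\frac1{|D|}\int_D|\xi+\nabla\psi_t|^{1-\tau}$ vanishes by Jensen, since $\frac1{t|D|}\int_D|t\xi+\nabla\varphi_t|=O(1)$.

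\noindent\textbf{Main obstacle.} The genuine difficulty is the residual error $\frac{C'}{t|D|}\int_D(|b+\eta_t|^p+1)$: the coercivity in $(H_1)_p$ only yields $\frac1t\int_D|\eta_t|^p=O(1)$, not $o(1)$, since the chemical composition field may concentrate at scale $|\eta_t|\sim t^{1/p}$. Where $|b+\eta_t|^p\gtrsim t\,|\xi+\nabla\psi_t|$ the coercive bound $f(b+\eta_t,\cdot)\ge\frac1C|b+\eta_t|^p-C$ dominates the $b$-independent bound $f^\infty(b+\eta_t,\cdot)\le C|\xi+\nabla\psi_t|$ of \eqref{finftygrowth}, so there $f^\infty\le\frac1t f$ holds directly and no $(H_2)_p$ error is spent. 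On the complementary region, however, $|b+\eta_t|^p$ is only of order $t|\xi+\nabla\psi_t|$, so the error density is of order $|\xi+\nabla\psi_t|$ and its integral is merely $O(1)$. Removing this last $O(1)$ is the crux: it requires extracting, via a De Giorgi/biting decomposition, the equi-integrable part of $|\xi+\nabla\psi_t|$ — on which truncation of $\eta_t$ at a level $R_t$ with $R_t^p=o(t)$ makes the error vanish — from its concentration part, on which the coercive bound again gives $f^\infty\le\frac1t f$ for free. This simultaneous decoupling of oscillation from concentration of the composition field, exactly the mechanism underlying the lower bound of Theorem \ref{maintheorem}, is where $(H_2)_p$ (and not mere Lipschitz continuity \eqref{p-lipschitzcontinuity}) is indispensable, and I expect it to be the main difficulty of the argument.
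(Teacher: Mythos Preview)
Your strategy coincides with the paper's on both inequalities. For $({\cal CQ}f)^\infty\le{\cal CQ}(f^\infty)$ the paper argues exactly as you do: ${\cal CQ}f\le f$, pass to the recession, then use that $({\cal CQ}f)^\infty$ is convex-quasiconvex to slide under the envelope. For the reverse inequality the paper also picks near-optimal $(\eta_t,\varphi_t)$ for ${\cal CQ}f(b,t\xi)$, sets $\psi_t:=\varphi_t/t$, inserts $(\eta_t,\psi_t)$ as competitors for ${\cal CQ}(f^\infty)(b,\xi)$, splits $Q$ according to whether $t|\xi+\nabla\psi_t|>L$, and applies $(H_2)_p$ together with \eqref{finftygrowth}. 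This is precisely your outline.

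The point of departure is your ``main obstacle''. The paper does \emph{not} regard it as an obstacle: right after choosing $(\eta_t,\varphi_t)$ it asserts in one line that, by $(H_1)_p$ and Proposition \ref{H12pCQf}, both $\|b+\eta_t\|_{L^p(Q)}$ and $\|\nabla(\tfrac1t\varphi_t)\|_{L^1(Q)}$ are bounded by a constant independent of $t$; with this the term $\tfrac{C}{t}\int_Q(1+|b+\eta_t|^p)$ collapses to $\tfrac{C'}{t}$ and the proof closes by letting $t\to\infty$. No decomposition, truncation, or biting argument appears. Your concern is legitimate --- from the coercivity in $(H_1)_p$ together with the upper bound ${\cal CQ}f(b,t\xi)\le C(1+|b|^p+t|\xi|)$ one only extracts $\int_Q|b+\eta_t|^p=O(t)$, i.e.\ $\tfrac1t\int_Q|b+\eta_t|^p=O(1)$ rather than $o(1)$ --- so the paper's one-line justification does not obviously deliver the uniform $L^p$ bound it claims. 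Nevertheless the paper treats this as immediate, and your De Giorgi/biting workaround, which you leave only sketched, goes well beyond anything in the published argument. In short: your proof matches the paper's up to the error $\tfrac1t\int|b+\eta_t|^p$; the paper disposes of it by asserting a uniform bound on $\|b+\eta_t\|_{L^p}$, whereas you flag it as the crux and propose substantially heavier machinery without completing it.
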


\begin{proof}
The proof will be achieved by double inequality.

For every $(b,\xi)\in \mathbb R^m\times \mathbb R^{n\times N}$ the inequality
\begin{equation}
\label{firstineqCQfinfty}
\displaystyle{({\cal CQ}f)^\infty(b,\xi)\leq {\cal CQ}(f^\infty)(b,\xi)}
\end{equation}
 follows by Definition \ref{CQenv}, and the fact that
${\cal CQ}f(b,\xi) \leq f(b,\xi) $.
In fact, \eqref{recession} entails that the same inequality holds when, passing to $(\cdot)^\infty$. Finally, $1.$ in Proposition \ref{proprecession}, guarantees \eqref{firstineqCQfinfty}.

In order to prove the opposite inequality, fix $(b,\xi)\in\mathbb R^m \times \mathbb R^{n \times N}$ and, for every $t >1$, take $\eta_t \in L^\infty(Q;\mathbb R^m)$, with $0$ average, and $\varphi_t \in W^{1,\infty}_0(Q;\mathbb R^n)$ such that
\begin{equation}\label{3.7BZZ}
\int_Q f( b+\eta_t, t \xi + \nabla \varphi_t(y))\,dy \leq {\cal CQ}f(b,t \xi )+ 1.
\end{equation}
By $(H_1)_p$ and Proposition \ref{H12pCQf}, we have %and \eqref{finftygrowth}, we have 
that $\|b+\eta_t\|_{L^p(Q)}$, $\left\|\nabla(\frac{1}{t}\varphi_t)\right\|_{L^1(Q)}\leq C $ for a constant independent on $t$. Defining $\psi_t:=\frac{1}{t}\varphi_t$, one has $\psi_t \in W^{1,\infty}_0(Q;\mathbb R^n)$ and thus
$${\cal CQ}(f^\infty)(b,\xi)\leq \int_Q f^\infty(b+\eta_t, \xi + \nabla \psi_t(y))\,dy.$$

Let $L$ be the constant appearing in condition $(H_2)_p$. We split the cube $Q$ in the set $\{y\in Q:\ t|\xi+\nabla \psi_t(y)|\le L\}$ and its complement in $Q$. Then we apply condition $(H_2)_p$ and \eqref{finftygrowth}  to get
$$\begin{array}{rcl}{\cal CQ}(f^\infty)(b,\xi) & \leq & \displaystyle \int_Q \left(C\frac{1+|b+\eta_t|^p}{t}+ C\frac{|\xi+\nabla \psi_t|^{1-\tau}}{t^\tau}+\frac{f(b+ \eta_t,t\xi+\nabla \varphi_t)}{t}+C\frac{L}{t}\right)\,dy.
\end{array}$$

Applying H\"{o}lder inequality and (\ref{3.7BZZ}), we get
$$\begin{array}{rcl}{\cal CQ}(f^\infty)(b,\xi) & \leq & \displaystyle \frac{C}{t^\tau}\left(\int_Q |\xi+\nabla \psi_t|\,dy\right)^{1-\tau}+\frac{{\cal CQ}f(b, t \xi )+ 1}{t}+C\frac{L}{t}+ \frac{C'}{t},
\end{array}$$
and the desired inequality follows by definition of $({\cal CQ}f)^\infty$ and using the fact that $\nabla\psi_t$ has bounded $L^1$ norm, letting $t$ go to $\infty$.
\end{proof}

\begin{remark}
\label{remCQfinfty}
It is worth to observe that inequality
$$
\left({\cal CQ}  f^{\infty}\right)  \left(  b,\xi\right)  \leq  {\cal CQ}\left(f
^{\infty}\right)\left( b,\xi\right)  ~\ \ \ \ \ \ \text{for every }\left(
b,\xi\right)  \in\mathbb{R}^{m}%
\times\mathbb{R}^{n\times N},
$$
has been proven without requiring neither $(H_1)_p$ and $(H_2)_p$ on $f$, nor $(H_1)_\infty$ and $(H_2)_\infty$.

Furthermore, we emphasize that the proof of Proposition \ref{CQf=CQf} cannot be performed in the same way in the case $p=\infty$, with assumptions $(H_1)_p$ and $(H_2)_p$ replaced by $(H_1)_\infty$ and $(H_2)_\infty$. Indeed, an $L^\infty$ bound on $b+ \eta_t$ analogous to the one in $L^p$ cannot be obtained from $(H_1)_\infty$. On the other hand, it is possible to deduce the equality between ${\cal CQ}f^\infty$ and $({\cal CQ}f)^\infty$, when $f$ satisfies  \eqref{H1_1inftyCQf} and \eqref{H2_inftyCQf}.
\end{remark}

\subsection{Auxiliary results}\label{qcx-remove}
Here we prove that assumption $(H_0)$ on $f$ is not necessary to provide an integral representation for ${\overline J}_p$ in \eqref{relaxedp}.

Indeed, we can assume that $f: \mathbb R^m \times \mathbb R^{n\times N}\to [0, \infty[$ is a continuous function and satisfies assumptions $(H_1)_p$ and $(H_2)_p$, ($p \in (1,\infty]$). First we extend, with an abuse of notation, the functional $J$ in \eqref{functional}, to $L^1(\Omega;\mathbb R^n) \times L^p(\Omega;\mathbb R^m)$, $p \in (1,\infty]$, as
\begin{equation}
\label{functionalextended}
J (u,v):=\left\{
\begin{array}{ll}
\displaystyle{\int_\Omega f(v,\nabla u)dx }& \hbox{ if } (u,v)\in W^{1,1}(\Omega;\mathbb R^n)\times L^p(\Omega; \mathbb R^m),\\
\\
\infty &\hbox{ otherwise.}
\end{array}
\right.
\end{equation} 
Then we define, according to Definition \ref{CQenv} the convex-quasiconvex envelope of $f$, ${\cal CQ}f$, and introduce, in analogy with  
\eqref{functionalextended} and \eqref{relaxedp}, 
the functional
\begin{equation}
\nonumber
J_{{\cal CQ} f}\left(  u,v\right)  :=\left\{
\begin{array}{ll}
\displaystyle{\int_{\Omega}{\cal CQ }f\left(v,\nabla u  \right)  dx} &\hbox{ if } (u,v) \in W^{1,1}(\Omega;\mathbb R^n)\times L^p(\Omega;\mathbb R^m),\\
\\
\infty &\hbox{otherwise,}
\end{array}
\right.
\end{equation}
%where $u \in W^{1,1}(\Omega;\mathbb R^n)$ and $v \in L^p(\Omega ;\mathbb R^m)$, 
($p \in (1,\infty]$) and,
% with an abuse of notation, its extension to $L^1(\Omega)\times L^p$  
\begin{equation}
\nonumber
\overline{J_{{\cal CQ}f}}_{p}\left(  u,v\right)  :=\inf\left\{  \underset{h\rightarrow
\infty}{\lim\inf}J_{{\cal CQ}f}\left(  u_h,v_h\right)  :u_h\in W^{1,1}\left(
\Omega;\mathbb{R}^{n}\right)  ,~v_h\in L^{p}\left(  \Omega;\mathbb{R}%
^{m}\right)  ,~u_h\rightarrow u\text{ in }L^{1},~v_h\rightharpoonup
v\text{ in }L^{p}\right\},
\end{equation} 
for any pair $\left(  u,v\right)  \in BV\left(  \Omega;\mathbb{R}^{n}\right)
\times L^{p}\left(  \Omega;\mathbb{R}^{m}\right), p \in (1,\infty)$.
Analogously, one can consider

\begin{equation}
\nonumber
\overline{J_{{\cal CQ}f}}_{\infty}\left(  u,v\right)  :=\inf\left\{  \underset{h\rightarrow
\infty}{\lim\inf}J_{{\cal CQ}f}\left(  u_h,v_h\right)  :u_h\in W^{1,1}\left(
\Omega;\mathbb{R}^{n}\right)  ,~v_h\in L^{p}\left(  \Omega;\mathbb{R}%
^{m}\right)  ,~u_h\rightarrow u\text{ in }L^{1},~v_h\overset{\ast}{\rightharpoonup}
v\text{ in }L^{\infty}\right\},
\end{equation}
for any pair $\left(  u,v\right)  \in BV\left(  \Omega;\mathbb{R}^{n}\right)
\times L^{\infty}\left(  \Omega;\mathbb{R}^{m}\right).$

\noindent Clearly, it results that for every $\left(  u,v\right)  \in BV\left(  \Omega;\mathbb{R}^{n}\right)
\times L^p\left(  \Omega;\mathbb{R}^{m}\right)$,
\begin{equation}
\nonumber
\overline{J_{{\cal CQ}f}}_{p}\left(  u,v\right)\leq {\overline J}_p(u,v),
\end{equation}
but, as in \cite[Lemma 8 and Remark 9]{RZ}, the following proposition can be proven. 

\begin{proposition}
\label{relaxedp=relaxedp}
Let $p \in (1,\infty]$ and consider the functionals $J$ and $J_{CQf}$ and their corresponding relaxed functionals $\overline{J}_p$  and $\overline{J_{{\cal CQ}f}}_{p}$. If $f$ satisfies conditions $(H_1)_p$ and $(H_2)_p$ if $p \in (1,\infty)$, and both $f$ and ${\cal CQ}f$ satisfy $(H_1)_\infty $ and $(H_2)_\infty$ if $p=\infty$, then
\begin{equation}\nonumber
\displaystyle{\overline{J}_p(u,v)=\overline{J_{{\cal CQ}f}}_p(u,v)}
\end{equation} for every $(u,v) \in BV(\Omega;\mathbb{R}^n)\times L^p(\Omega;\mathbb R^m), p \in (1,\infty]$.
\end{proposition}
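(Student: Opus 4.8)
The plan is to establish the equality $\overline J_p(u,v) = \overline{J_{{\cal CQ}f}}_p(u,v)$ by proving the nontrivial inequality $\overline J_p(u,v) \le \overline{J_{{\cal CQ}f}}_p(u,v)$; the reverse one is already recorded in the excerpt, being an immediate consequence of ${\cal CQ}f \le f$. Fix $(u,v)\in BV(\Omega;\mathbb R^n)\times L^p(\Omega;\mathbb R^m)$. Since $\overline{J_{{\cal CQ}f}}_p$ is an infimum over recovery sequences, it suffices to fix one admissible pair of sequences $u_h\in W^{1,1}(\Omega;\mathbb R^n)$, $v_h\in L^p(\Omega;\mathbb R^m)$ with $u_h\to u$ in $L^1$, $v_h\rightharpoonup v$ in $L^p$ (respectively $v_h\overset{\ast}\rightharpoonup v$ in $L^\infty$ when $p=\infty$) and $\liminf_h J_{{\cal CQ}f}(u_h,v_h)<\infty$, and then produce, for each fixed $h$, new sequences $\{(u_h^k,v_h^k)\}_k$ with $u_h^k\to u_h$ in $L^1$ and $v_h^k\rightharpoonup v_h$ (resp. $\overset{\ast}\rightharpoonup$) as $k\to\infty$, such that $\limsup_k J(u_h^k,v_h^k)\le J_{{\cal CQ}f}(u_h,v_h)+\varepsilon_h$. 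A diagonal argument — extracting a subsequence that reconciles the two indices, using metrizability of the relevant weak (weak-$\ast$) topologies on bounded sets of $L^p$ — then yields a single admissible sequence for $\overline J_p$ along which $\liminf J$ is bounded by $\liminf_h J_{{\cal CQ}f}(u_h,v_h)$, and taking the infimum over the original sequences gives the claim.

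The core of the matter is therefore the local construction producing, for a single $w\in W^{1,1}(\Omega;\mathbb R^n)$ and $z\in L^p(\Omega;\mathbb R^m)$, sequences approaching $(w,z)$ in the prescribed topologies along which $J$ converges to $\int_\Omega {\cal CQ}f(z,\nabla w)\,dx$. This is a standard blow-up / Vitali-covering argument: cover $\Omega$ (up to a negligible set) by small disjoint cubes $Q(x_i,\rho)$ on which $\nabla w$ and $z$ are nearly constant, equal to their averages $\xi_i:=\nabla w(x_i)$ and $b_i:=z(x_i)$; on each such cube invoke the characterization \eqref{CQ envelope} of ${\cal CQ}f$ — together with the remark in the excerpt that, under a growth condition of type $(H_1)_p$, the test spaces $L^\infty$ and $W_0^{1,\infty}$ there may be replaced by $L^p$ and $W_0^{1,1}$ — to choose $\eta_i\in L^p$ with zero average and $\varphi_i\in W_0^{1,1}$ nearly realizing ${\cal CQ}f(b_i,\xi_i)$; then rescale these to the cube $Q(x_i,\rho)$, glue the pieces together (the boundary values of the $\varphi_i$ vanish, so $u_h^k$ stays in $W^{1,1}$), and set $v_h^k := z + \sum_i \eta_i(\tfrac{\cdot-x_i}{\rho})$. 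The zero-average property of the $\eta_i$ gives $v_h^k\rightharpoonup z$ (resp. $\overset{\ast}\rightharpoonup z$, using the uniform $L^\infty$ bound available under $(H_1)_\infty$), while the perturbations of $w$ are small in $L^1$; the continuity estimates \eqref{p-lipschitzcontinuity} (resp. \eqref{infty-lipschitzcontinuity}) and Proposition \ref{H12pCQf} control the error from replacing $(z,\nabla w)$ by the piecewise-constant data and from passing ${\cal CQ}f$ through the average.

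I expect the main obstacle to be the bookkeeping in the $p=\infty$ case. There one must keep the constructed sequence $v_h^k$ uniformly bounded in $L^\infty$ so that weak-$\ast$ convergence is available and so that the growth and continuity bounds $(H_1)_\infty$, $(H_2)_\infty$, \eqref{infty-lipschitzcontinuity} apply with a uniform constant $C_M$; this forces one to use that ${\cal CQ}f$ inherits an $L^\infty$-type growth — which is exactly why the statement demands that \emph{both} $f$ and ${\cal CQ}f$ satisfy $(H_1)_\infty$ and $(H_2)_\infty$ in that case, rather than this being automatic as Proposition \ref{H12pCQf} gives it for $p<\infty$. A secondary technical point is ensuring the diagonalization respects the $BV$ structure, namely that no spurious singular part is created: since each $u_h$ lies in $W^{1,1}$ and the approximants $u_h^k$ do too, the relaxed energy is evaluated at $W^{1,1}$ functions throughout and the singular term never enters at this stage — the $D^su$ contribution in Theorems \ref{MainResultp} and \ref{MainResultinfty} arises only in the separate lower- and upper-bound arguments, not here — so one only needs the elementary fact that $\liminf$ of $J$ is unchanged under this two-parameter approximation, which the diagonal lemma delivers.
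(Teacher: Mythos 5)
Your proposal is correct and coincides with the approach the paper relies on: the paper offers no proof of Proposition \ref{relaxedp=relaxedp} at all, deferring to \cite[Lemma 8 and Remark 9]{RZ} (cf. Remark \ref{obs=relaxe}), and the argument there is exactly your scheme --- the trivial inequality from ${\cal CQ}f\le f$, a recovery-sequence construction on a fine cover using the representation \eqref{CQ envelope} (with $L^{p}$ and $W^{1,1}_0$ test fields under $(H_1)_p$), and a diagonalization justified by metrizability of weak (weak-$*$) convergence on bounded sets, the bounds coming from the coercivity in $(H_1)_p$. You also correctly single out the one genuinely delicate point, namely the uniform $L^\infty$ control of the oscillating fields $\eta_i$ when $p=\infty$ (where $(H_1)_\infty$ gives no coercivity in $b$, so near-optimal $\eta$ in \eqref{CQ envelope} must be truncated), which is precisely why the statement additionally assumes $(H_1)_\infty$ and $(H_2)_\infty$ for ${\cal CQ}f$ itself rather than deducing them from Proposition \ref{H12pCQf}.
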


\begin{remark}
\label{obs=relaxe}
The argument has not been shown since it is already contained in \cite[Lemma 8 and Remark 9]{RZ}. In \cite{RZ} it is not required that $f$ satisfies $(H_2)_p$, $(p \in (1,\infty])$. Indeed, the coincidence between the two functionals  $\overline{J}_p$  and $\overline{J_{{\cal CQ}f}}_p$ holds independently on this assumption on $f$, but in order to remove hypothesis $(H_0)$ from the representation theorem we need to assume that ${\cal CQ}f$ inherits the same properties as $f$, which is the case as it has been observed in Proposition \ref{H12pCQf}.
It is also worth to observe that, when $p=\infty$, 

\eqref{H2_inftyCQf} is equivalent to
\begin{equation}
\nonumber
|f^\infty(b,\xi)- f(b,\xi)|\leq C(1+\alpha(|b|)+ |\xi|)
\end{equation}
for every $(b,\xi) \in \mathbb R^m \times \mathbb R^{n \times N}$, and this latter property is inherited by ${\cal CQ}f$ and ${\cal CQ}f^\infty$ as it can be easily verified arguing as in \cite[Proposition 2.3]{RZCh}. Thus Proposition \ref{relaxedp=relaxedp} holds when $p=\infty$ just requiring that $f$ satisfies \eqref{H1_1inftyCQf} and \eqref{H2_inftyCQf}.

\end{remark}

\noindent The following result can be deduced in  full analogy with \cite[Theorem 12]{RZ}, where it has been proven for $\overline{J}_\infty$.

\begin{proposition}
\label{overlineJpvariational}
Let $\Omega$ be a bounded and open  set of $\mathbb R^N$ and let $ f:\mathbb R^m \times \mathbb R^{n \times N}\to \mathbb R$ be a continuous function satisfying $(H_1)_p$ and $(H_2)_p$, $1<p\leq\infty$. Let $J$ be the functional defined in \eqref{functional}, then $\overline{J}_p$ in \eqref{relaxedp} ($1<p<\infty$), \eqref{relaxedinfty} $(p =\infty)$ is a variational functional.
\end{proposition}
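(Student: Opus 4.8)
\textbf{Proof strategy for Proposition \ref{overlineJpvariational}.}

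The plan is to show that $\overline{J}_p$, when viewed as a set function $A \mapsto \overline{J}_p(u,v;A)$ localized over open subsets $A$ of $\Omega$, satisfies the axioms of a variational functional: it is the restriction to open sets of a Radon measure (for each fixed $(u,v)$), it is local, and it is lower semicontinuous with respect to the $L^1 \times (L^p\text{-weak})$ convergence. Following the template of \cite[Theorem 12]{RZ}, I would first introduce the localized relaxed functional
\begin{equation}\nonumber
\overline{J}_p(u,v;A):=\inf\left\{\liminf_{h\to\infty}\int_A f(v_h,\nabla u_h)\,dx : u_h\to u \text{ in } L^1(A),\ v_h\rightharpoonup v \text{ in } L^p(A)\right\},
\end{equation}
for $A\subseteq\Omega$ open (with the weak-$\ast$ convergence when $p=\infty$), and observe that lower semicontinuity along the defining sequences is immediate from a diagonal argument, as is the superadditivity $\overline{J}_p(u,v;A\cup B)\ge \overline{J}_p(u,v;A)+\overline{J}_p(u,v;B)$ for disjoint $A,B$, both of which are formal consequences of the infimum structure.

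The substantive step is the \emph{nested subadditivity} estimate: for open sets $A'\subset\subset A$ and $B\subset\Omega$ with $A\cup B\supseteq\Omega$, one has $\overline{J}_p(u,v;A\cup B)\le \overline{J}_p(u,v;A)+\overline{J}_p(u,v;B)$. This is proved by the classical De Giorgi slicing / cut-off argument: take near-optimal sequences $(u_h^A,v_h^A)$ on $A$ and $(u_h^B,v_h^B)$ on $B$, choose a smooth cut-off $\theta$ between $A'$ and $A$, and glue $u_h:=\theta u_h^A+(1-\theta)u_h^B$ and $v_h:=\chi_{A'}v_h^A+\chi_{\Omega\setminus A'}v_h^B$ (more carefully, using a layer of intermediate cut-offs $A'=A_0\subset\subset A_1\subset\subset\cdots\subset\subset A_k=A$ so that one can select, by an averaging/pigeonhole argument, an index where the error term supported on the overlap $A_i\setminus A_{i-1}$ is small). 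Here the growth condition $(H_1)_p$ (resp. $(H_1)_\infty$) is what controls the energy contributed by the gradient of $\theta$ times $u_h^A-u_h^B$ on the overlap region — the term $|\nabla\theta|\,|u_h^A-u_h^B|$ is bounded in $L^1$ since both sequences converge to $u$ there, so it can be absorbed by choosing the right layer. Combined with superadditivity and the inner regularity coming from the trivial monotonicity in $A$, the Di Giorgi--Letta criterion then yields that $\overline{J}_p(u,v;\cdot)$ is the trace of a Radon measure.

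Once measure-theoretic structure is established, locality — namely that $\overline{J}_p(u,v;A)$ depends only on the restrictions of $u$ and $v$ to $A$ — is immediate from the definition, and lower semicontinuity with respect to the prescribed convergences is again the diagonalization argument already used. The main obstacle is the gluing step: one must ensure simultaneously that the glued $v_h$ still converges weakly (resp. weakly-$\ast$) to $v$ on $A\cup B$, which is why $v_h$ is glued by a sharp characteristic-function cut rather than a smooth one (no derivative of $v_h$ appears in $J$, so no regularity of the cut is needed, and weak convergence is preserved under multiplication by a fixed characteristic function on disjoint pieces), and that the gradient terms coming from the cut-off of $u_h$ are genuinely controlled — this is exactly where $(H_1)_p$/$(H_1)_\infty$ and the fact that the overlap can be made thin via the layer-averaging trick enter. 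The case $p=\infty$ requires only the obvious notational changes, using $(H_1)_\infty$ and $(H_2)_\infty$ in place of their finite-$p$ counterparts, exactly as in \cite[Theorem 12]{RZ}.
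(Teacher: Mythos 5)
Your proposal is correct and follows exactly the route the paper intends: the paper does not reproduce a proof of Proposition \ref{overlineJpvariational} but defers it to \cite[Theorem 12]{RZ}, whose argument is precisely the De Giorgi--Letta criterion established through the cut-off/slicing construction you describe, with the growth condition $(H_1)_p$ (resp.\ $(H_1)_\infty$) controlling the overlap terms, the layer-averaging selection making the transition region harmless, and the characteristic-function gluing of $v_h$ preserving weak (resp.\ weak-$\ast$) convergence. One cosmetic slip: in the nested subadditivity step the left-hand side should read $\overline{J}_p(u,v;A'\cup B)$ rather than $\overline{J}_p(u,v;A\cup B)$, and no hypothesis $A\cup B\supseteq\Omega$ is needed --- the mechanism you describe in fact proves the correct statement.
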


By virtue of this result, it turns out that for every $(u,v)\in BV(\Omega;\mathbb R^n)\times L^p(\Omega;\mathbb R^m)$, $\overline{J}_p(u,v; \cdot)$, $(p\in (1, \infty])$ is the restriction to the open subsets in $\Omega$ of a Radon measure on $\Omega$, thus it can be decomposed as the sum of two terms
\begin{equation}\label{summeas}
\displaystyle{\overline{J}_p(u,v;\cdot)= \overline{J}_p^a(u,v; \cdot)+ \overline{J}_p^s(u,v;\cdot),}
\end{equation}
where $\overline{J}_p^a(u,v; \cdot)$ and $\overline{J}_p^s(u,v; \cdot)$ denote the absolutely continuous part and the singular part with respect to the Lebesgue measure, respectively. Next proposition deals with the scaling properties of $\overline{J}_p$.
\begin{proposition}
\label{scalingproperties}
Let $f:\mathbb R^m \times \mathbb R^{n \times N}\to \mathbb R$ be a continuous and convex-quasiconvex function, let $J$ and $\overline{J}_p$ be the functionals defined respectively by \eqref{functional} and \eqref{relaxedp} when $p \in (1,\infty]$, respectively (\eqref{relaxedinfty}, when $p=\infty)$. Then the following scaling properties are satisfied
\begin{equation}
\label{scaling}
\begin{array}{ll}
\displaystyle{\overline{J}_p(u+\eta, v;\Omega)= \overline{J}_p(u,v;\Omega) \hbox{ for every }\eta \in \mathbb R^n,}\\
\\
\displaystyle{\overline{J}_p\left(u(\cdot-x_0), v(\cdot-x_0);x_0+\Omega\right)=\overline{J}_p(u(\cdot), v(\cdot); \Omega) \hbox{ for every }x_0 \in \mathbb R^N,}
\\
\\
\displaystyle{\overline{J}_p\left(u_\varrho, v_\varrho;\frac{\Omega-x_0}{\varrho}\right)=\varrho^{-N}\overline{J}_p(u,v;\Omega),}
\end{array}
\end{equation}
where $u_\varrho(y):= \frac{u(x_0+ \varrho y)- u(x_0)}{\varrho}$ and $v_\varrho(y):= v(x_0+ \varrho y)$, for $y \in \frac{\Omega-x_0}{\varrho}.$
\end{proposition}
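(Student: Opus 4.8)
The plan is to verify the three scaling identities in \eqref{scaling} directly from the definition \eqref{relaxedp} (respectively \eqref{relaxedinfty}) by transporting recovery sequences back and forth through the indicated changes of variables, exploiting the invariance of the integrand $f(v,\nabla u)$ under translations in $u$ and in the spatial variable, and its homogeneity behavior under the rescaling $u_\varrho$. The first identity is the easiest: if $\{(u_h,v_h)\}$ is admissible for $\overline{J}_p(u,v;\Omega)$, then $\{(u_h+\eta,v_h)\}$ is admissible for $\overline{J}_p(u+\eta,v;\Omega)$ since $u_h+\eta \to u+\eta$ in $L^1$, the gradients are unchanged, $\nabla(u_h+\eta)=\nabla u_h$, and hence $J(u_h+\eta,v_h)=J(u_h,v_h)$; taking the infimum and then repeating with $\eta$ replaced by $-\eta$ yields equality. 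The second identity is handled the same way, composing each $(u_h,v_h)$ with the translation $x\mapsto x-x_0$: this is a measure-preserving bijection from $x_0+\Omega$ onto $\Omega$, it sends $W^{1,1}$ to $W^{1,1}$ and $L^p$ to $L^p$ with the convergences preserved, and $J$ is unchanged by the change of variables $y=x-x_0$ in the integral; again one argues by double inequality.

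For the third (genuinely rescaling) identity, I would take an admissible sequence $\{(u_h,v_h)\}$ for $\overline{J}_p(u,v;\Omega)$ and set
\[
w_h(y):=\frac{u_h(x_0+\varrho y)-u(x_0)}{\varrho},\qquad z_h(y):=v_h(x_0+\varrho y),\qquad y\in \frac{\Omega-x_0}{\varrho}.
\]
Then $\nabla w_h(y)=\nabla u_h(x_0+\varrho y)$, so by the change of variables $x=x_0+\varrho y$ (Jacobian $\varrho^N$) one gets $J(w_h,z_h;\tfrac{\Omega-x_0}{\varrho})=\varrho^{-N}J(u_h,v_h;\Omega)$. One checks $w_h\to u_\varrho$ in $L^1(\tfrac{\Omega-x_0}{\varrho})$ (the $L^1$ norm of the difference scales by $\varrho^{-N-1}\cdot\varrho=\varrho^{-N}$ after accounting for the $1/\varrho$ factor, so convergence is preserved) and $z_h\rightharpoonup v_\varrho$ in $L^p$ (respectively $\overset{\ast}{\rightharpoonup}$ in $L^\infty$), using that composition with the affine dilation is a bounded linear bijection on $L^p$ commuting with weak limits. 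Hence $\overline{J}_p(u_\varrho,v_\varrho;\tfrac{\Omega-x_0}{\varrho})\le \varrho^{-N}\liminf_h J(u_h,v_h;\Omega)$, and taking the infimum over admissible sequences gives $\le$. The reverse inequality follows by applying the same construction to the inverse dilation $y\mapsto x_0+\varrho y$, or equivalently by symmetry (replacing $\varrho$ by $1/\varrho$ and $\Omega$ by $\tfrac{\Omega-x_0}{\varrho}$, together with the already-proven translation invariance to absorb the additive constant $u(x_0)$).

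The only point requiring a little care — and the one I would flag as the main obstacle — is the bookkeeping of the additive constant $u(x_0)$ appearing in $u_\varrho$: the rescaled sequence $w_h$ naturally involves $u_h(x_0+\varrho y)/\varrho$, and one must subtract the fixed constant $u(x_0)/\varrho$ and invoke the translation invariance \eqref{scaling}$_1$ (already established in this proof) to identify the limit as $u_\varrho$ rather than $u_\varrho+\text{const}$. One should also make sure the approximate-continuity/differentiability points $x_0$ at which these identities are later used are generic enough that $u(x_0)$ is well defined, but for the statement as given $x_0$ is an arbitrary point of $\mathbb{R}^N$ and the constant is simply carried along formally. No growth hypothesis beyond continuity and convexity-quasiconvexity of $f$ is needed here, since the argument is purely a change-of-variables manipulation of the defining infima; the hypotheses $(H_1)_p,(H_2)_p$ enter only later when one passes from these abstract properties to the explicit integral representation.
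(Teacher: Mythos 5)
Your proof is correct; the paper in fact states Proposition \ref{scalingproperties} without any proof, and your recovery-sequence/change-of-variables argument (translation invariance of $u\mapsto\nabla u$, spatial translation, and the dilation $x=x_0+\varrho y$ with Jacobian $\varrho^N$, plus the double-inequality symmetry) is exactly the standard argument the authors implicitly rely on. The only cosmetic remark is that the constant $u(x_0)/\varrho$ cancels identically in $w_h-u_\varrho$, so the limit is identified directly and no appeal to the already-proven first identity is actually needed there.
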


The following result will be exploited in the sequel. The proof is omitted since it develops along the lines of \cite[Lemma 5.50]{AFP}, the only differences being the presence of $v$ and the convexity-quasiconvexity of $f$.  
\begin{lemma}\label{Lemma5.50AFP}
Let $f:\mathbb R^m \times \mathbb R^{N \times n}\to \mathbb R$ be a continuous and convex-quasiconvex function, and let $J$ and $\overline{J}_p$ be the functionals defined respectively by \eqref{functional} and \eqref{relaxedp}. Let $\nu\in S^{N-1}$, $\eta \in S^{n-1}$ and $\psi:\mathbb R \to \mathbb R$, bounded and increasing. Denoted by $Q$ the cube $Q_\nu$, let $u \in BV(Q;\mathbb R^n)$ be representable in $Q$ as
\begin{equation}
\nonumber
\displaystyle{u(y)=\eta \psi(y \cdot \nu),}
\end{equation} 
and let $w\in BV(Q;\mathbb R^n)$ be such that ${\rm supp}(w-u)\subset \subset Q.$ Let $v \in L^p(Q;\mathbb R^m)$.
Then
$$
\displaystyle{\overline{J}_p(w,v;Q)\geq f\left(\int_Q v dy, Dw(Q)\right).}
$$
\end{lemma}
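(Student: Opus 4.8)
The plan is to follow the strategy of \cite[Lemma 5.50]{AFP}, adapting it to the present vector-field setting. The key observation is that for a function of the one-dimensional form $u(y)=\eta\,\psi(y\cdot\nu)$ with $\psi$ bounded and increasing, the measure $Dw(Q)$ is controlled by the structure of $u$ near $\partial Q$, since $w-u$ is compactly supported in $Q$; in particular $Dw(Q)=Du(Q)$. First I would fix an arbitrary pair of sequences $u_h\in W^{1,1}(Q;\mathbb R^n)$, $v_h\in L^p(Q;\mathbb R^m)$ with $u_h\to w$ in $L^1$ and $v_h\rightharpoonup v$ in $L^p$ (weak$\ast$ if $p=\infty$), realizing (up to a subsequence) the liminf defining $\overline J_p(w,v;Q)$, and I may assume this liminf is finite, so by $(H_1)_p$ the sequence $\{v_h\}$ is bounded in $L^p$ and $\{\nabla u_h\}$ is bounded in $L^1$.

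The heart of the argument is a slicing/averaging estimate. Using that $w-u$ has compact support in $Q=Q_\nu$, the values of $w$ (hence the limit of $u_h$) agree with $\eta\psi(y\cdot\nu)$ in a neighbourhood of the two faces of $Q$ orthogonal to $\nu$; this pins down the flux of $Dw$ through $Q$ in the direction $\nu$, so that $Dw(Q)=Du(Q)=\eta\otimes\nu\,\big(\psi(1/2)-\psi(-1/2)\big)\cdot|{\rm faces}|$, a rank-one matrix. Next I would use the convexity-quasiconvexity of $f$ directly: for each $h$, apply Jensen's inequality in the convex-quasiconvex form \eqref{cross-qcx} on the cube $Q$, with test fields $\eta_h:=v_h-\fint_Q v_h\,dy$ (zero average) and $\varphi_h:=u_h-$ (the affine-in-$\nu$ interpolant of the boundary data of $w$), obtaining
\[
\int_Q f(v_h,\nabla u_h)\,dy \;\ge\; f\!\left(\fint_Q v_h\,dy,\ \fint_Q \nabla u_h\,dy\right)= f\!\left(\fint_Q v_h\,dy,\ Du_h(Q)\right),
\]
after a small care about the fact that $\varphi_h$ is not exactly in $W^{1,1}_0$ but differs from it by a controlled boundary term, which can be absorbed using the $p$-Lipschitz continuity \eqref{p-lipschitzcontinuity} and the boundary compatibility of $w$ and $u$. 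Then I pass to the limit: $\fint_Q v_h\,dy\to\fint_Q v\,dy=\frac{1}{|Q|}\int_Q v\,dy$ by weak convergence, and $Du_h(Q)\to Dw(Q)$ because the normal trace on $\partial Q$ is fixed by the boundary data; continuity of $f$ (which holds by the growth bound and separate convexity, cf. \eqref{p-lipschitzcontinuity}) then yields $\liminf_h\int_Q f(v_h,\nabla u_h)\,dy\ge f\big(\frac{1}{|Q|}\int_Q v\,dy,\ Dw(Q)\big)$. Taking the infimum over all admissible sequences gives the claim, after noting $\int_Q v\,dy=\fint_Q v\,dy$ up to the normalization $|Q_\nu|=1$, so that the stated $f\big(\int_Q v\,dy, Dw(Q)\big)$ appears.

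The main obstacle I anticipate is the careful handling of the boundary layer: $\varphi_h=u_h-(\text{boundary interpolant})$ need not vanish on $\partial Q$, because $u_h$ only converges to $w$ in $L^1$, not in trace; so one must introduce a cut-off in a thin shell near $\partial Q$, estimate the extra gradient contribution there (which is where the increasing monotone structure of $\psi$ and the one-dimensional form of $u$ are used, to keep $|Dw|$ in the shell small), and control the resulting error in $f$ via \eqref{p-lipschitzcontinuity} together with the equi-integrability that $(H_1)_p$ provides for $\{|v_h|^p\}$ only in a weak sense — this last point forces one to split $v_h$ into a truncated part and a remainder, a standard but slightly delicate maneuver. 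Aside from this technical shell argument, everything else is a direct transcription of \cite[Lemma 5.50]{AFP} with the extra slot $v$ carried along passively, exactly as the remark preceding the statement indicates.
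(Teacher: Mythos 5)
The paper does not actually print a proof of this lemma (it is explicitly omitted, with a pointer to \cite[Lemma 5.50]{AFP}), so your proposal can only be measured against the argument that reference requires. Your scaffolding --- take a recovery sequence, glue it to $w$ in a thin shell near $\partial Q$, apply the convex-quasiconvexity inequality, pass to the limit using $\int_Q v_h\,dy\to\int_Q v\,dy$ --- is the right skeleton, but the step you call the ``heart of the argument'' has a genuine gap. The inequality $\int_Q f(v_h,\nabla u_h)\,dy\ge f\bigl(\int_Q v_h\,dy,\int_Q\nabla u_h\,dy\bigr)$ is \emph{not} a consequence of \eqref{cross-qcx} for arbitrary $u_h$: convex-quasiconvexity is a Jensen inequality only against an \emph{affine} function perturbed by a field in $W^{1,\infty}_0$ (or by a periodic field). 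Even after you repair the boundary behaviour of $u_h$ by cutting off against $w$, the resulting competitor coincides near $\partial Q$ with $u=\eta\,\psi(y\cdot\nu)$, and $u$ minus your ``affine-in-$\nu$ interpolant'' $\ell$ is of order one in $W^{1,1}$ on every boundary shell unless $\psi$ is affine: the measure $\eta\otimes\nu\,\bigl(d\psi-(\psi_+-\psi_-)\,dt\bigr)$ does not become small near $\partial Q$. So this discrepancy is not ``a controlled boundary term'' that \eqref{p-lipschitzcontinuity} can absorb, and for a general quasiconvex integrand (think of a determinant-type $f$) Jensen against non-admissible fields is simply false. The point where the hypotheses ``$u$ one-directional, $\psi$ bounded and increasing'' must actually be used is precisely here: $u-\ell$ is $Q_\nu$-periodic (its traces on the two faces orthogonal to $\nu$ coincide because $D\ell(Q)=Du(Q)$, and it is constant in the orthogonal directions), so one needs the periodic form of the convex-quasiconvexity inequality --- equivalently, rank-one convexity of $f(b,\cdot)$ along $\eta\otimes\nu$ combined with Jensen's inequality for the one-dimensional measure $d\psi$. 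Your proposal never engages with this mechanism, and without it the central inequality is unjustified.

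A second, smaller gap: ``$Du_h(Q)\to Dw(Q)$ because the normal trace on $\partial Q$ is fixed by the boundary data'' is not true for an arbitrary recovery sequence; only $w$ is constrained near $\partial Q$, while the $u_h$ converge merely in $L^1$ and the mass of $Du_h$ can concentrate on $\partial Q$. You do flag the cut-off remedy in your final paragraph, but two further points need attention: (i) the glued function $\varphi u_h+(1-\varphi)u$ need not belong to $W^{1,1}$ when $\psi$ has jumps, so its energy has to be estimated through the relaxed functional $\overline{J}_p$, using that $\overline{J}_p(\cdot,\cdot;\cdot)$ is (the trace of) a measure as in Proposition \ref{overlineJpvariational}, rather than through $J$ itself; and (ii) the convergence $\int_Q v_h\,dy\to\int_Q v\,dy$ follows at once by testing the weak $L^p$ convergence against the constant function $1\in L^{p'}(Q)$, so the truncation of $v_h$ you anticipate is not needed for that step.
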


\section{Main Results}\label{Main}

This section is devoted to deduce the results stated in Theorems \ref{MainResultp} and \ref{MainResultinfty}.
We start by proving the lower bound in the case $1<p<\infty$.
For what concerns the upper bound we present, for the reader's  convenience, a self contained proof in Theorem \ref{uppergeneralp}. For the sake of completeness we observe that the upper bound, in the case $1<p<\infty$, could be deduced as a corollary from the case $p=\infty$ (see Theorem \ref{MainResultinfty}), which, in turn, under slightly different assumptions, is contained in \cite{FKP1}. 
\subsection{Lower semicontinuity in $BV\times L^{p},~1<p<\infty$}

\begin{theorem}\label{maintheorem}
Let $\Omega$ be a bounded open set of $\mathbb{R}^{N}$, let $f:\mathbb{R}^{m}\times\mathbb{R}^{n\times
N}\rightarrow\left[  0,\infty\right)  $ be a continuous function satisfying
$(H_0), \left(  H_{1}\right)  _{p} $ and $\left(  H_2\right)  _{p}$, and let $\overline{J}_p$ be the functional defined in \eqref{relaxedp}. Then
\begin{equation}\label{lowerboundLp}
\overline{J}_p(u,v; \Omega)
  \geq\int_{\Omega}f(v,\nabla
u)  dx
 +\int_{\Omega}f^\infty\left( 0,\frac{dD^s%
u}{d\left\vert D^s u\right\vert } \right)  d\left\vert
D^s u\right\vert 
\end{equation}
for any $(u,v)\in BV(\Omega;\mathbb{R}^{n})  \times L^{p}(\Omega
;\mathbb{R}^{m}). $ 
%with respect to the ($L^{1}-$strong$~\times~L^{p}-$weak)-convergence.
\end{theorem}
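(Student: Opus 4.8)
The plan is to establish the lower bound \eqref{lowerboundLp} by the blow-up method, exploiting the measure-theoretic structure provided by Proposition \ref{overlineJpvariational}. Fix $(u,v) \in BV(\Omega;\mathbb R^n)\times L^p(\Omega;\mathbb R^m)$ and, using \eqref{summeas}, write $\overline{J}_p(u,v;\cdot)$ as the sum of its absolutely continuous and singular parts with respect to $\mathcal L^N$. It suffices to prove the two pointwise Radon-Nikod\'ym estimates
\begin{equation}\nonumber
\frac{d\overline{J}_p^a(u,v;\cdot)}{d\mathcal L^N}(x_0)\geq f(v(x_0),\nabla u(x_0))\quad \text{for }\mathcal L^N\text{-a.e. }x_0\in\Omega,
\end{equation}
and
\begin{equation}\nonumber
\frac{d\overline{J}_p(u,v;\cdot)}{d|D^su|}(x_0)\geq f^\infty\Bigl(0,\frac{dD^su}{d|D^su|}(x_0)\Bigr)\quad\text{for }|D^su|\text{-a.e. }x_0\in\Omega.
\end{equation}
Indeed, once these hold, adding them and integrating against the respective measures yields \eqref{lowerboundLp}, since $\overline{J}_p^a\geq (\text{its density})\,\mathcal L^N$ and $\overline{J}_p\geq \overline{J}_p^s\geq(\text{its density})\,|D^su|$.

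For the absolutely continuous part I would proceed as follows. At a point $x_0$ where: (i) the Radon-Nikod\'ym derivative above exists and is finite; (ii) $x_0$ is a Lebesgue point of $v$ and of $\nabla u$ in the sense of Theorem \ref{thm2.8FM2}; (iii) the approximate differentiability \eqref{approximate differentiability} of Proposition \ref{thm2.3BBBF} holds; and (iv) $|D^su|$ and the singular part of $\overline{J}_p(u,v;\cdot)$ vanish infinitesimally faster than $\mathcal L^N$ at $x_0$ — all of which hold a.e. — choose cubes $Q(x_0,\varepsilon)$ with $\overline{J}_p(u,v;\partial Q(x_0,\varepsilon))=0$. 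Using the scaling properties \eqref{scaling} of Proposition \ref{scalingproperties}, rescale to the unit cube: the blown-up maps $u_\varepsilon(y)=(u(x_0+\varepsilon y)-u(x_0))/\varepsilon$ converge in $L^1(Q;\mathbb R^n)$ to the affine map $y\mapsto \nabla u(x_0)\cdot y$ (by \eqref{approximate differentiability}), and $v_\varepsilon(y)=v(x_0+\varepsilon y)$ converge in $L^p(Q;\mathbb R^m)$ — hence weakly in $L^p$ — to the constant $v(x_0)$ (by the Lebesgue point property, since $p>1$ gives an $L^p$ Lebesgue point a.e.). Then lower semicontinuity of $(u',v')\mapsto\overline{J}_p(u',v';Q)$ along these sequences together with $\overline{J}_p\geq J$ being already lower semicontinuous as the relaxed functional, plus the translation invariance in the first variable from \eqref{scaling}, gives $\liminf_\varepsilon \varepsilon^{-N}\overline{J}_p(u,v;Q(x_0,\varepsilon))\geq \overline{J}_p(\nabla u(x_0)\cdot y, v(x_0);Q)\geq f(v(x_0),\nabla u(x_0))$, the last step using that $\overline{J}_p$ never falls below $J$ evaluated on the affine competitor (convex-quasiconvexity $(H_0)$ of $f$ enters here to guarantee $\overline{J}_p(\ell,c;Q)\geq f(c,\nabla\ell)$ for affine $\ell$ and constant $c$).

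For the singular part the argument is analogous but more delicate. By Alberti's rank-one theorem, at $|D^su|$-a.e. $x_0$ the matrix $A:=\frac{dD^su}{d|D^su|}(x_0)$ has rank one, $A=\eta\otimes\nu$ with $\eta\in S^{n-1},\nu\in S^{N-1}$. One blows up on cubes $Q_\nu(x_0,\varepsilon)$ oriented by $\nu$, this time rescaling by the factor $|D^su|(Q_\nu(x_0,\varepsilon))$ rather than $\varepsilon$ (so $\varepsilon^{N}\ll|D^su|(Q_\nu(x_0,\varepsilon))$, reflecting that the singular measure is not absolutely continuous): the rescaled maps converge to a one-dimensional profile $\eta\,\psi(y\cdot\nu)$ with $\psi$ bounded increasing, while $v_\varepsilon\rightharpoonup$ a limit whose cube-average tends to $0$ because $v\,\mathcal L^N$ is absolutely continuous and hence gives measure $o(|D^su|(Q_\nu(x_0,\varepsilon)))$ to the shrinking cubes — this is the point, stressed in the introduction and in \cite{FKP1}, where the value $b=0$ appears. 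Here Lemma \ref{Lemma5.50AFP} is exactly the tool: applied to the slicing profile it yields $\overline{J}_p(w,v_\varepsilon;Q_\nu)\geq f(\int_{Q_\nu}v_\varepsilon\,dy, Dw(Q_\nu))$, and after rescaling, passing to the limit, and using positive $1$-homogeneity to pass from $f$ to $f^\infty$ (together with the continuity of $f^\infty$ in $b$ — actually its independence of $b$ — from Proposition \ref{proprecession}), one recovers $f^\infty(0,A)$.

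The main obstacle I anticipate is the careful handling of the competitor sequences in the blow-up: one must modify the $v_h$ (and $u_h$) achieving the infimum in \eqref{relaxedp} so that, after restriction to and rescaling of the small cubes, they still form admissible sequences with the correct limits, and in particular so that the average of the rescaled $v_\varepsilon$ on the singular blow-up genuinely tends to $0$ and not merely stays bounded — this requires combining a diagonalization over the minimizing sequences with the absolute continuity of $v\mathcal L^N$ and an appropriate choice of "good" cubes on whose boundaries the limit measure charges nothing. The growth condition $(H_1)_p$ supplies the equi-integrability needed to control boundary-layer modifications of $u_h$ via a De Giorgi-type slicing/averaging argument, while $(H_2)_p$ is what makes the passage from $f$ to $f^\infty$ quantitative and uniform in the singular blow-up; reconciling these two limiting regimes within a single coherent selection of cubes and sequences is the technical heart of the proof.
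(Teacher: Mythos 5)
Your proposal follows essentially the same route as the paper's proof: the blow-up method applied separately to the absolutely continuous and singular parts of the measure $\overline{J}_p(u,v;\cdot)$, with the scaling properties of Proposition \ref{scalingproperties} and lower semicontinuity handling the bulk estimate, and Alberti's rank-one theorem, a De Giorgi-type cut-off producing the competitor $w_h$, Lemma \ref{Lemma5.50AFP}, hypothesis $(H_2)_p$ and the independence of $f^\infty$ on $b$ handling the singular estimate, exactly as in Steps 1 and 2 of the paper. The technical points you flag (boundary-layer modification of the blown-up sequences, the vanishing of the cube averages of $v$ against $|D^su|$) are precisely the ones the paper resolves via \eqref{5.79AFP}--\eqref{5.82AFP} and \eqref{0derivativev}.
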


\begin{proof}
The proof will be achieved, in two steps, namely by showing that 

\begin{align}
\lim_{\varrho \to 0^+}\frac{\overline{J_p}(u,v; Q  (x_{0};\varrho))}{{\cal L}^N(Q(x_0,\varrho))}   &  \geq f(v(x_{0})  ,\nabla u(x_{0}))
,~\ \ \ \ \text{for }\mathcal{L}^{N}-\text{a.e. }x_{0}\in\Omega
,\label{lowerboundbulk}\\
\\
\lim_{\varrho \to 0^+}\frac{\overline{J_p}\left(u,v; Q(x_0,\varrho) \right)}{|Du|(Q(x_0,\varrho))}   &  \geq f^{\infty}\left(0,\frac{dD^{s}u}{d\left\vert D^{s}u\right\vert }\left(
x_{0}\right)  \right)  \text{, \ for }\left\vert D^{s}u\right\vert
-\text{a.e.~}x_{0}\in\Omega. \label{lowerboundsingular}%
\end{align}

\noindent Indeed, if $\left(  \ref{lowerboundbulk}\right) $ and $ \left(  \ref{lowerboundsingular}%
\right)  $ hold then, by virtue of \eqref{summeas}, and \cite[Theorem 2.56]{AFP}, \eqref{lowerboundLp} follows
immediately.

\noindent\textbf{Step 1.} Inequality $\left(  \ref{lowerboundbulk}\right)  $
is obtained through an argument entirely similar to \cite[Proposition 5.53]{AFP} and exploiting \cite[Theorem 1.1]{RZ}.

For ${\cal L}^N-$a.e. $x_0\in \Omega$ it results that $u$ is approximately differentiable (see \eqref{approximate differentiability}) and 
\begin{equation}
\nonumber
\lim_{\varrho \to 0^+}\frac{1}{{\cal L}^N(Q(x_0,\varrho))}\int_{Q(x_0,\varrho)}|v(x)-v(x_0)|dx = 0.
\end{equation}

\noindent Consequently, given $\varrho >0$, and defined $u_\varrho$ and $v_\varrho$ as in Proposition \ref{scalingproperties}, 

it results that $u_\varrho \to u_0$ in $L^1(\Omega;\mathbb R^n)$, where $u_0:= \nabla u(x_0)x$ and $v_\varrho \to v(x_0)$ in $L^p(\Omega;\mathbb R^m)$.
Then the scaling properties \eqref{scaling}, and the lower semicontinuity of $\overline{J}_p$ entail that
\begin{equation}
\label{scaling estimate}
\displaystyle{\liminf_{\varrho \to 0^+}\frac{\overline{J}_p(u,v; Q(x_0,\varrho))}{\varrho^N} = \liminf_{\varrho \to 0^+}\overline{J}_p(u_\varrho, v_\varrho; Q)\geq \overline{J}_p(u_0, v(x_0);Q).}
\end{equation}
Then the lower semicontinuity result proven in \cite[Theorem 11]{RZ}, when $u$ is in $W^{1,1}(\Omega;\mathbb R^n)$ and $v \in L^p(\Omega;\mathbb R^m)$, allows us to estimate the last term in \eqref{scaling estimate} as follows
$$
\displaystyle{\overline{J}_p(u_0,v(x_0);Q)\geq f(v(x_0),\nabla u(x_0)),}
$$
and that provides \eqref{lowerboundbulk}.

\noindent\textbf{Step 2.} Here we present the proof of \eqref{lowerboundsingular}. To this end we exploit techniques very similar to \cite{ADM} (see \cite[Proposition 5.53]{AFP}). Let $Du= z |Du|$ be the polar decomposition of $Du$ (see \cite[Corollary 1.29]{AFP}), for $z \in S^{N\times n-1}$, and recall that for $|D^s u|$- a.e. $x_0$, $z(x_0)$ admits the representation $\eta(x_0)\otimes \nu(x_0)$, with $\eta(x_0)\in S^{n-1}$ and $\nu(x_0) \in S^{N-1}$, (see \cite[Theorem 3.94]{AFP}). In the following, we will denote the cube $Q_\nu(x_0,1)$ by $Q$.

To achieve \eqref{lowerboundsingular} it is enough to show that 
$$
\displaystyle{\lim_{\varrho \to 0^+}\frac{{\overline J}_p(u,v; Q(x_0,\varrho))}{|Du|(Q(x_0, \varrho))} \geq f^\infty(0, z(x_0))}
$$
at any Lebesgue point $x_0$ of $z$ relative to $|Du|$ such that the limit on the left hand side exists and 
\begin{equation}
\label{Lebpointsingular}
z(x_0)=\eta(x_0)\otimes \nu(x_0),  \;\;\;\;\;\;\;\;\;\;\; \lim_{\varrho \to 0^+}\frac{|Du|(Q(x_0,\varrho))}{\varrho^N}=\infty,
\end{equation}
\begin{equation}
\label{0derivativev}
\displaystyle{0= \lim_{\varrho \to 0^+}\frac{\int_{Q(x_0,\varrho)} |v|^p dx}{|Du|(Q(x_0,\varrho))} = \lim_{\varrho \to 0^+}\frac{\int_{Q(x_0,\varrho)} |v|dx}{|Du|(Q(x_0,\varrho))}.}
\end{equation}
The above requirements are, indeed, satisfied at $|D^s u|$-a.e. $x_0 \in \Omega$, by Besicovitch's derivation theorem and Alberti's rank-one theorem (see \cite[Theorem 3.94]{AFP}).
Set $\eta\equiv \eta(x_0)$ and $\nu\equiv \nu(x_0)$, for $\varrho < N^{-\frac{1}{2}}{\rm dist}(x_0,\partial \Omega )$, define
\begin{equation}\nonumber
%\label{urho}
u_\varrho(y):=\frac{u(x_0+\varrho y)-\tilde{u}_\varrho}{\varrho}\frac{\varrho^N}{|Du|(Q(x_0,\varrho))}, \;\;\;\;\; y \in Q,
\end{equation}
where $\tilde{u}_\varrho$ is the average of $u$ in $Q(x_0,\varrho)$.
Analogously define, as in Proposition \ref{scalingproperties},
\begin{equation}
\label{vrho}
\displaystyle{v_\varrho(y):= v(x_0+ \varrho y), \;\;\; y \in Q.}
\end{equation}
Let us fix $ t \in (0,1)$. By \cite[formula (2.32)]{AFP}, there exists a sequence $\{\varrho_h\}$ converging to $0$ such that
\begin{equation}
\label{5.79AFP}
\displaystyle{\lim_{h \to \infty} \frac{|Du|(Q(x_0,t \varrho_h))}{|Du|(Q(x_0,\varrho_h))}\geq t^N.}
\end{equation}
Denote $u_{\varrho_h}$ by $u_h$, then $|D u_h|(Q)=1$ and, passing to a not relabelled subsequence, $\{u_h\}$ converges in $L^1(Q;\mathbb R^n)$ to a $BV$ function $\overline{u}$. Correspondingly, denote $v_{\varrho_h}$ by $v_h $. Then, arguing as in \cite[Proof of Proposition 5.53]{AFP} we have
\begin{equation}
\label{5.80AFP}
|D \overline{u}|(Q)\leq 1 \;\;\;\;\;\;\;\;\;\;\; \hbox{ and } \;\;\;\;\;\;\;\;\;\;\; |D \overline{u}|(\overline{Q}_t
)\geq t^N,
\end{equation} 
where $Q_t:=t Q$. It results that $\overline{u}(y)=\eta \psi(y\cdot \nu)$, for some bounded increasing function $\psi$ in $\left(-\frac{1}{2},\frac{1}{2}\right)$.  Take $\varphi \in C^1_c(Q)$ such that $\varphi=1$ on $\overline{Q}_t$ and $0 \leq \varphi \leq 1$, and let us define $w_h:=\varphi u_h + (1-\varphi)\overline{u} .$ The functions $w_h$ converge to $\overline u$ in $L^1(Q;\mathbb R^n)$ and moreover we have 
$$
\begin{array}{ll}
\displaystyle{|D(w_h- u_h)|(Q) \leq |D(u_h- {\overline u})|(Q \setminus \overline{Q}_t) + \int_Q |\nabla \varphi||u_h- {\overline u}|dy}\\
\\
\displaystyle{\leq|D u_h|(Q \setminus \overline{Q}_t) + |D {\overline u}|(Q \setminus {\overline Q_t})+ \int_ Q |\nabla \varphi||u_h- {\overline u}|dy.}
\end{array}
$$
Therefore, by \eqref{5.79AFP}
 and \eqref{5.80AFP}, one has
 \begin{equation}
 \label{5.81AFP}
 \displaystyle{\limsup_{h \to \infty} |D (w_h- u_h)|(Q)\leq 2(1-t^N).}
 \end{equation}
Similarly,
$$
\displaystyle{|D w_h|(Q \setminus \overline{Q}_t) \leq |D u_h|(Q \setminus {\overline Q}_t) + |D \overline{u}|(Q \setminus \overline{Q}_t) + \int_Q |\nabla \varphi||u_h-\overline{u}|dy,}
$$
consequently
\begin{equation}
\label{5.82AFP}
\displaystyle{\limsup_{h  \to \infty} |D w_h|(Q \setminus \overline{Q}_t) \leq 2 (1-t^N).}
\end{equation}

Setting $c_h:= \frac{|Du|(Q(x_0,\varrho_h))}{\varrho_h^N}$, by the scaling properties of ${\overline J_p}$ in Proposition \ref{scalingproperties} and by the growth conditions $(H_1)_p$, we have
$$
\begin{array}{ll}
\displaystyle{\frac{\overline{J}_p(u,v;Q(x_0,\varrho_h))}{|Du|(Q(x_0,\varrho_h))} = \frac{\overline {J}_p(c_h u_h, v_h; Q)}{c_h} \geq \frac{\overline{J}_p(c_h w_h, v_h;\overline{Q}_t )}{c_h}}\\
\\
\displaystyle{\geq \frac{\overline{J}_p(c_h u_h, v_h; Q)}{c_h} - C(c_h^{-1} |Q \setminus \overline{Q}_t| + |D w_h|(Q \setminus {\overline Q}_t) + c_h^{-1}\int_{Q \setminus \overline{Q}_t} |v_h|^p dy).}
\end{array}
$$
%DA CONTROLLARE! -it seems correct-
By \eqref{Lebpointsingular}, $c_h \to \infty$, moreover taking into account \eqref{vrho} and \eqref{0derivativev}, by \eqref{5.82AFP}, it results that
$$
\displaystyle{\lim_{\varrho \to 0^+}\frac{\overline{J}_p(u,v;Q(x_0,\varrho))}{|Du|(Q(x_0,\varrho))} \geq \limsup_{h \to \infty} \frac{\overline{J}_p(c_h u_h, v_h; Q)}{c_h} - 2 C(1-t^N)}.
$$
On the other hand, Lemma \ref{Lemma5.50AFP} entails that, for every $h \in \mathbb N$,
$$
\displaystyle{\overline{J}_p(c_hw_h, v_h; Q)\geq f\left(\int_Q v_hdy,c_h D w_h(Q) \right)\geq f\left(\int_Q v_h dy, c_h D u_h(Q) \right)- c_h \gamma |D(w_h- u_h)|(Q),}
$$
where $\gamma$ is the constant appearing in \eqref{p-lipschitzcontinuity}.
Then by \eqref{5.81AFP}, we have that 
$$
\displaystyle{\lim_{\varrho \to 0^+} \frac{\overline{J}_p(u,v;Q(x_0,\varrho))}{|D u|(Q(x_0,\varrho))} \geq \limsup_{h \to \infty} \frac{f\left( \int_Q v_h dy,c_h D u_h(Q)\right)}{c_h}- 2(C+ \gamma)(1-t^N).}
$$
By the definition of $u_h$, $Du_h(Q)= \frac{D u(Q(x_0,\varrho_h))}{|Du|(Q(x_0,\varrho_h))}$, hence $Du_h(Q) \to z(x_0)$, since $x_0$ is a Lebesgue point of $z$.
Now, taking into account \eqref{p-lipschitzcontinuity} and $(H_2)_p$ , we have
$$
\displaystyle{ \limsup_{h \to \infty} \frac{f\left(\int_Q v_h dy, c_h D u_h(Q)\right)}{c_h}=\lim_{h \to \infty} \frac{f\left( \int_Q v_h dy, c_h z(x_0)\right)}{c_h}}
$$
$$
\displaystyle{= \lim_{h \to \infty} \left(f^\infty\left(\int_Q v_h dy,z(x_0)\right) + C \frac{\left|\int_Q v_hdy\right|^p +1}{c_h}\right)=f^\infty(z(x_0)),} 
$$
where it has been exploited the fact that $c_h \to \infty$, 3. of Proposition \ref{proprecession}, the nondecreasing behaviour of the $L^p$ norm in the unit cube with respect to $p$ (i.e. $|\int_Q v_h dy |^p \leq \int_Q|v_h|^p dy$),  and \eqref{0derivativev}.  
\end{proof}

\subsection{Relaxation}\label{relaxation}
We start by observing that Theorem \ref{MainResultinfty} is contained in \cite{FKP1} under a uniform coercivity assumption.
We do not propose the proof in our setting, since it develops along the lines of Theorems \ref{maintheorem} and \ref{uppergeneralp}. 

On the other hand, several observations about Theorem \ref{MainResultinfty} are mandatory:
\begin{itemize}
\item[i)] If $f$ satisfies $(H_1)_p$ and $(H_2)_p$ then $\overline{J}_p(u,v)\leq \overline{J}_\infty(u,v)$ for every $(u,v)\in BV(\Omega;\mathbb R^n)\times L^\infty(\Omega;\mathbb R^m)$.

\item[ii)] For the reader's convenience we observe that the proof of the lower bound in Theorem \ref{MainResultinfty} develops exactly as that of  Theorem \ref{maintheorem}, using the $L^\infty$ bound on $v$  to deduce \eqref{0derivativev} and the uniform bound on $v_{\varrho}$ in \eqref{vrho}, $(H_2)_\infty$ and \eqref{infty-lipschitzcontinuity} in order to estimate $\displaystyle{\limsup_{h \to \infty} \frac{f\left(\int_Q v_h dy, c_h D u_h(Q)\right)}{c_h}}$.

Regarding the upper bound, the bulk part follows from \cite[Theorems 12 and 14]{RZ}, while for the singular part we can argue exactly as proposed in the proof of the upper bound in \cite{FKP1} just considering conditions $(H_1)_\infty$ and $(H_2)_\infty$ in place of $(H_1)_p$ and $(H_2)_p$.
\item[iii)] The above arguments remain true under assumptions \eqref{H1_1inftyCQf} and \eqref{H2_inftyCQf}. 
\end{itemize}

We are now in position to prove the upper bound for the case $BV \times L^p$, for $1<p<\infty$.
We emphasize that an alternative proof could be obtained via a truncation argument from the case $p=\infty$ as the one presented in \cite[Theorem 12]{RZ}, but we prefer the self contained argument below. 

\begin{theorem}\label{uppergeneralp}
Let $\Omega$ be a bounded open set of $\mathbb{R}^{N}~$ and let $f:\mathbb{R}^{m}\times\mathbb{R}^{n\times
N}\rightarrow\left[  0,\infty\right) $ be a continuous
function. Then, assuming that $f$ satisfies $(H_0), \left(  H_{1}\right)
_p$ and $(H_2)_p,$%
\begin{align*}
\overline{J}_p\left(  u,v\right)  \leq\int_{\Omega}f\left(v ,\nabla u \right)
dx +\int_{\Omega}f^{\infty}\left(0,\frac{dD^{s}%
u}{d\left\vert D^{s}u\right\vert }\left(  x\right)  \right)  d\left\vert
D^{s}u\right\vert \left(  x\right),
\end{align*}
for every $(u,v)\in BV(\Omega;\mathbb R^n)\times L^p(\Omega;\mathbb R^m)$.

\end{theorem}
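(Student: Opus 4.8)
The plan is to construct, for every $(u,v)\in BV(\Omega;\mathbb R^n)\times L^p(\Omega;\mathbb R^m)$, recovery sequences $\{u_h\}\subset W^{1,1}(\Omega;\mathbb R^n)$ and $\{v_h\}\subset L^p(\Omega;\mathbb R^m)$ with $u_h\to u$ in $L^1$, $v_h\rightharpoonup v$ in $L^p$, and $\limsup_h J(u_h,v_h)$ not exceeding the target functional. Since $\overline J_p(u,v;\cdot)$ is (the restriction to open sets of) a Radon measure by Proposition \ref{overlineJpvariational}, and $\mathcal L^N$ and $|D^su|$ are mutually singular, it suffices by the Radon--Nikod\'ym theorem and \cite[Theorem 2.56]{AFP} to establish the two local estimates
\begin{equation}
\nonumber
\frac{d\overline J_p(u,v;\cdot)}{d\mathcal L^N}(x_0)\le f(v(x_0),\nabla u(x_0))\quad\text{for }\mathcal L^N\text{-a.e.\ }x_0,
\end{equation}
and
\begin{equation}
\nonumber
\frac{d\overline J_p(u,v;\cdot)}{d|D^su|}(x_0)\le f^\infty\!\left(0,\frac{dD^su}{d|D^su|}(x_0)\right)\quad\text{for }|D^su|\text{-a.e.\ }x_0.
\end{equation}
The first step is therefore a localization/blow-up reduction of this kind; here $(H_0)$ (convexity-quasiconvexity of $f$, hence of $f^\infty$ by Proposition \ref{proprecession}) plus the growth bounds $(H_1)_p$ and the continuity estimate \eqref{p-lipschitzcontinuity} are used throughout.

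For the bulk (absolutely continuous) part I would argue as in the classical Sobolev-target relaxation: at an $\mathcal L^N$-a.e.\ point $x_0$ where $u$ is approximately differentiable and $x_0$ is a Lebesgue point of both $v$ and $\nabla u$, rescale via $u_\varrho,v_\varrho$ as in Proposition \ref{scalingproperties} so that $u_\varrho\to \nabla u(x_0)\,x$ in $L^1$ and $v_\varrho\to v(x_0)$ in $L^p$, and then invoke the already-known upper bound for the $W^{1,1}\times L^p$ case from \cite{RZ} (the relaxation/integral representation there gives, for affine $u_0$ and constant $b$, the value $f(b,\nabla u_0)$ since $f$ is convex-quasiconvex). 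The scaling identities \eqref{scaling} convert this back to the bound on $d\overline J_p/d\mathcal L^N$. This part is essentially bookkeeping once the scaling lemma and the $W^{1,1}$-result are in hand.

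For the singular part I would work at a point $x_0$ satisfying Alberti's rank-one representation $\frac{dD^su}{d|D^su|}(x_0)=\eta\otimes\nu$, together with $\lim_{\varrho\to0^+}|Du|(Q(x_0,\varrho))/\varrho^N=\infty$ and the vanishing conditions \eqref{0derivativev} for $v$ (all these hold $|D^su|$-a.e.). Here the natural construction is to mollify $u$ on the cube $Q_\nu(x_0,\varrho)$ to obtain $u_h\in W^{1,1}$ with $|Du_h|$ concentrating the right mass, and to choose $v_h$ appropriately (e.g.\ $v_h\equiv 0$, or a mollification of $v$) so that it carries negligible $L^p$-mass relative to $|Du|$ on the shrinking cubes; then the growth bound $f(v_h,\nabla u_h)\le C(1+|v_h|^p+|\nabla u_h|)$ from $(H_1)_p$ plus $(H_2)_p$ (which controls $f(b,t\xi)/t$ by $f^\infty(b,\xi)$ up to an error $C'(\tfrac{|b|^p+1}{t}+\tfrac{|\xi|^{1-\tau}}{t^\tau})$) lets one pass to the limit and recover $f^\infty(0,\eta\otimes\nu)=f^\infty(0,\frac{dD^su}{d|D^su|}(x_0))$, using item 3 of Proposition \ref{proprecession} that $f^\infty$ is independent of $b$. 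A cleaner alternative, which I would actually pursue, is to first prove the one-dimensional/planar model case (as in \cite[Lemma 5.50]{AFP}, i.e.\ the function $u(y)=\eta\psi(y\cdot\nu)$) by an explicit mollification in the $\nu$-direction, and then glue via a cut-off as in Lemma \ref{Lemma5.50AFP}, controlling the gluing error through \eqref{p-lipschitzcontinuity} and the fact that $f^\infty$ is Lipschitz in $\xi$ with linear growth \eqref{finftygrowth}.

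The main obstacle I expect is the singular part: one must simultaneously (i) build $u_h$ whose gradient concentrates along $\eta\otimes\nu$ with controlled total variation, (ii) keep $v_h$ weakly convergent to $v$ yet with $L^p$-energy negligible on the blow-up scale so that the cross-dependence of $f$ on its first argument disappears in the limit, and (iii) handle the error terms in $(H_2)_p$ — in particular the term $\tfrac{|\xi|^{1-\tau}}{t^\tau}$ requires a H\"older estimate on $\int|\nabla u_h|$ exactly as in the lower-bound proof of Theorem \ref{maintheorem}, and one must check this stays bounded under the rescaling $c_h=|Du|(Q(x_0,\varrho_h))/\varrho_h^N\to\infty$. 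Matching the lower bound of Theorem \ref{maintheorem} and this upper bound then yields the stated identity for $\overline J_p(u,v)$; combined with Proposition \ref{relaxedp=relaxedp} this also removes $(H_0)$, giving Theorem \ref{MainResultp}.
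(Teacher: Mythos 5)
Your overall reduction (writing $\overline J_p(u,v;\cdot)$ as a Radon measure via Proposition \ref{overlineJpvariational} and bounding the two Radon--Nikod\'ym densities separately) matches the paper's framework, but both of your local arguments have a directional problem. For the bulk part you propose to rescale, use $u_\varrho\to u_0:=\nabla u(x_0)x$, $v_\varrho\to v(x_0)$, and ``convert back'' via the scaling identities \eqref{scaling}. The conversion $\overline J_p(u,v;Q(x_0,\varrho))/\varrho^N=\overline J_p(u_\varrho,v_\varrho;Q)$ is fine, but to conclude you would need $\limsup_{\varrho\to0^+}\overline J_p(u_\varrho,v_\varrho;Q)\le\overline J_p(u_0,v(x_0);Q)$, i.e.\ \emph{upper} semicontinuity of $\overline J_p$ along the blow-up sequence. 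The functional is only lower semicontinuous; the inequality $\liminf_\varrho\overline J_p(u_\varrho,v_\varrho;Q)\ge\overline J_p(u_0,v(x_0);Q)$ is exactly what Step 1 of Theorem \ref{maintheorem} uses for the \emph{lower} bound, and it points the wrong way here. The paper instead takes the explicit competitor $u_k=u\ast\varrho_{\varepsilon_k}$ with $v$ fixed and uses \eqref{p-lipschitzcontinuity} to get $\int_A f(v,\nabla u_k)\,dx\le\int_A f(v,\nabla u\ast\varrho_{\varepsilon_k})\,dx+\gamma|D^su|(I_{\varepsilon_k}(A))$, then differentiates with respect to $\mathcal L^N$; no blow-up is needed.

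For the singular part your plan (local mollifications on shrinking cubes $Q_\nu(x_0,\varrho)$, gluing via Lemma \ref{Lemma5.50AFP}) does not close either: Lemma \ref{Lemma5.50AFP} is a lower-bound device (it bounds $\overline J_p(w,v;Q)$ from \emph{below}), and a pointwise construction at each rank-one point still has to be assembled into a single admissible pair $(u_h,v_h)$ on all of $\Omega$ with $u_h\to u$ in $L^1$ and $v_h\rightharpoonup v$ in $L^p$, including matching the (singular) boundary data of $u$ on each small cube --- none of which your sketch addresses. The paper's actual mechanism is different and avoids all local constructions: it introduces the $(p,1)$-positively homogeneous majorant
\begin{equation}
\nonumber
g(b,\xi):=\sup_{t\ge0}\frac{f(t^{1/p}b,t\xi)-f(0,0)}{t},
\end{equation}
shows via the monotonicity of difference quotients of (separately) convex functions that $g(0,\xi)=f^\infty(0,\xi)$ whenever ${\rm rank}\,\xi\le1$, bounds $\int_A f(v,\nabla u_h)\,dx\le\int_A f(v,0)\,dx+\int_A g(v,\nabla u_h)\,dx$ for the \emph{same global} mollified sequence, and then kills every $v$-dependent term by Radon--Nikod\'ym differentiation with respect to $|D^su|$ (since $|v|^p\mathcal L^N\perp|D^su|$), concluding with the Reshetnyak-type argument of \cite[Proposition 5.49]{AFP} applied to $g(0,\cdot)$. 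You correctly identified the obstacles (your points (i)--(iii)), but the proposal does not supply the idea that actually overcomes them.
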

\begin{proof}[Proof] First we observe that Proposition \ref{overlineJpvariational} entails that  $\overline{J}_p$ is a variational functional.
Thus the inequality can be proved analogously to \cite[Proposition 5.49]{AFP}. 
For what concerns the bulk part, it is enough to observe that
given $u\in BV(\Omega;\mathbb R^n)$ and $v \in L^p(\Omega;\mathbb R^m)$, taking a sequence of standard mollifiers $\{\varrho_{\varepsilon_k}\}$, where $\varepsilon_k \to 0$, it results that $\nabla u_k= \nabla u \ast \varrho_{\varepsilon_k}+ D^s u \ast \varrho_{\varepsilon_k}$, where $u_k:= u \ast \varrho_{\varepsilon_k}$. The local Lipschitz behaviour of $f$ in \eqref{p-lipschitzcontinuity} gives
$$
\displaystyle{\int_A f(v, \nabla u_k)dx \leq \int_A f(v, \nabla u \ast \varrho_{\varepsilon_k})dx + \gamma |D^s u|(I_{\varepsilon_k}(A))}
$$
for every $k \in \mathbb N$, where $I_{\varepsilon_k}(A)$ denotes the $\varepsilon_k$ neighborhood of $A$. Then if $|D^s u|(\partial A)=0$, letting $\varepsilon_k \to 0$, we obtain
$$
\overline{J}_p(u,v;A)\leq \int_A f(v,\nabla u)dx + \gamma |D^s u|(A),
$$
for every open subset $A$ of $\Omega$.
Thus we can conclude that
$$
\displaystyle{{\overline J}^a_p(u,v;B)\leq \int_B f(v(x), \nabla u(x))dx}
$$
for every $(u,v)\in BV(\Omega;\mathbb R^n) \times L^p(\Omega;\mathbb R^m)$ and $B$ Borel subset of $\Omega$.

%The proof develops along the lines of \cite[Proposition 5.49]{AFP}.

To achieve the result, it will be enough to  show that
\begin{equation}
\nonumber
\displaystyle{\overline{J}^s_p(u,v;B)\leq \int_B f^\infty\left(0,\frac{d D^s u}{d |D^s u|}\right) d |D^s u|\; \; \hbox{ for every }B \hbox{ Borel subset of }\Omega.} 
\end{equation}
For every $\xi \in \mathbb R^{n \times N}$ and $b \in \mathbb R^m$, define the function
\begin{equation}
\nonumber
\displaystyle{g(b,\xi):=\sup_{t \geq 0}\frac{f(t^{\frac{1}{p}}b, t\xi)- f(0,0)}{t}.}
\end{equation}

\noindent It is easily seen that
$g$ is $(p,1)$-positively homogeneous, i.e.
$
tg(b,\xi) = g(t^{\frac{1}{p}}b,t\xi)
$
for every $t>0$, $(b,\xi)\in \mathbb R^m \times \mathbb R^{n\times N}$, $g$ is continuous and, since $f$ satisfies \eqref{p-lipschitzcontinuity}, $g$ inherits the same property.

Moreover, the monotonicity property of difference quotients of convex functions ensures that, whenever rank $\xi \leq 1$, $g(b,\xi)= f^\infty_p(b,\xi)$, where the latter is defined as
$$
f^\infty_p(b,\xi):=\limsup_{t\rightarrow \infty}\frac{f(t^{\frac{1}{p}}b,t\xi )}{t}.
$$
In particular  $g(0,\xi)= f^\infty(0,\xi)=f^\infty_p(0,\xi),$ whenever  rank $\xi \leq 1$.

\noindent Then for every open set $A \subset \subset \Omega$ such that $|Du|(A)=0$, defining for every $h \in \mathbb N$, $u_h:= u \ast \varrho_{\varepsilon_ h}$ and $v_h:= v$ where $\{\varrho_{\varepsilon_h}\}$ is a sequence of standard mollifiers and $\varepsilon_h \to 0$. Then $u_h \to u$ in $L^1$. Also \cite[Theorem 2.2]{AFP} entails that $|D u_h|\to |Du|$ weakly $\ast$ in $A$ and $|Du_h|(A) \to |D u|(A)$.
Thus
\begin{equation}
\label{stimacong}
\displaystyle{\overline{J}_p(u,v;A)\leq \liminf_{h \to \infty} \int_A f(v, \nabla u_h)dx \leq \limsup_{h \to \infty} \int_A f(v, 0)dx + \liminf_{h \to \infty}\int_A g(v,\nabla u_h)dx}.
\end{equation}
For what concerns the first term in the right hand side, we have that it is bounded by 
$
\int_A (1+|v|^p) dx,
$
thus taking the Radon-Nikod\'ym derivative with respect to ${|D^s u|}$ we obtain $0$.

Regarding the second term in the right hand side of \eqref{stimacong}, we have

$$
\begin{array}{ll}
\displaystyle{\liminf_{h\to \infty} \int_{A} g\left(v(x), Du \ast \varrho_h\right) dx}
\\
\displaystyle{\leq\limsup_{h\to \infty} \int_{A} g\left(0, Du \ast \varrho_h\right) dx+C \int_{A}|v(x)|^p dx+ \int_A |v(x)||Du \ast \varrho_h|^{\frac{1}{p'}}dx.}
\end{array}
$$

Taking the Radon-Nikod\'ym derivative, the last two terms disappear, since $|Du \ast \varrho_h| \to |Du| $, $|v|^p{\cal L}^N$ is singular with respect to $|D^c u|$ and the H\"{o}lder inequality can be applied, i.e.

$$
\displaystyle{\int_A |v(x)||Du \ast \varrho_h|^{\frac{1}{p'}}dx\leq   \left(\int_A|v(x)|^p dx\right)^{\frac{1}{p}}  \left(\int_A |D u\ast \varrho_n|dx\right)^{\frac{1}{p'}}}.
$$ 
Then the thesis is achieved via the same arguments as in \cite[Proposition 5.49]{AFP}.
\end{proof}

\begin{remark}
\label{otherproof}
It is worth to observe that an alternative argument to the one presented above, concerning the upper bound inequality for the singular part, can be provided by means of approximation. In fact, one can prove that $\displaystyle{\overline{J}^s_p(u,v;B)\leq \int_B f^\infty\left(0,\frac{d D^s u}{d |D^s u|}\right) d |D^s u|\; \; \hbox{ for every }B \hbox{ Borel subset of }\Omega,} $ when $u \in BV(\Omega;\mathbb R^n)$ and $v \in C({\overline \Omega};\mathbb R^m)$, and then via a standard approximation argument via mollification allows to reach every $v \in L^p(\Omega;\mathbb R^m)$.

For what concerns the case $v \in C({\overline \Omega};\mathbb R^m)$ it is enough to consider the function

\noindent$
\displaystyle{g(b,\xi):=\sup_{t \geq 0}\frac{f(b, t\xi)- f(b,0)}{t},}$ 
exploit its properties of positive 1-homogeneity in the second variable, i.e.
$
tg(b,\xi) = g(b,t\xi),
$
for every $t>0$, $(b,\xi)\in \mathbb R^m \times \mathbb R^{n\times N}$, \eqref{infty-lipschitzcontinuity}, and the fact that when rank $\xi \leq 1$, then $g(b,\xi)$ is constant with respect to $b$ and $f^\infty(b,\xi)= g(b,\xi)=f^\infty(0,\xi)$. 
To conclude it is enough to apply Reshetnyak continuity theorem.
\end{remark}

\medskip
\begin{proof}[Proof of Theorem \ref{MainResultp}]
The result follows from  Theorems \ref{maintheorem} and \ref{uppergeneralp}, applying Proposition \ref{relaxedp=relaxedp} to remove assumption $(H_0)$.
\end{proof}

\medskip

\noindent{\bf Acknowledgements}
The research of the authors has been
partially supported by Funda{\c c}${\tilde {\rm a}}$o para a Ci${\hat {\rm e}}$ncia e Tecnologia (Portuguese Foundation for Science and
Technology) through UTA-CMU/MAT/0005/2009 and CIMA-UE.

\end{document}